\documentclass[a4paper,oneside,reqno,final]{amsart}
\usepackage[utf8]{inputenc}
\usepackage[a4paper,margin=3cm]{geometry} 
\usepackage{bm}
\usepackage{amsmath}
\usepackage{amsthm}
\usepackage{amscd}
\usepackage{amssymb}
\usepackage{MnSymbol}
\usepackage{latexsym}
\usepackage{eucal}
\usepackage{dsfont}
\usepackage{mathtools}
\usepackage{enumitem}
\usepackage[notref,notcite]{showkeys}
\usepackage{todonotes}

\usepackage{verbatim}
\usepackage{graphicx}
\usepackage{float}
\usepackage{array,booktabs}

\usepackage[colorlinks,pdftex]{hyperref}
\hypersetup{
linkcolor=black,
citecolor=black,
pdftitle={},
pdfauthor={Franziska K\"uhn},
pdfkeywords={},
}

\widowpenalty=10000
\clubpenalty=10000
\displaywidowpenalty=10000

\swapnumbers
\theoremstyle{theorem} \newtheorem{thm}{Theorem}[section]
\theoremstyle{theorem} \newtheorem{lem}[thm]{Lemma}
\theoremstyle{theorem} \newtheorem{prop}[thm]{Proposition}
\theoremstyle{theorem} \newtheorem{kor}[thm]{Corollary}
\theoremstyle{definition} 
\theoremstyle{remark} 
\theoremstyle{remark} 
\theoremstyle{definition} 
\theoremstyle{remark} \newtheorem{bem}[thm]{Remark}
\theoremstyle{remark} 
\theoremstyle{definition}  \newtheorem{bsp}[thm]{Example}
\theoremstyle{definition}  
\theoremstyle{definition}

\DeclareMathOperator \re {Re}
\DeclareMathOperator \im {Im}

\DeclareMathOperator \spt {supp}

\DeclareMathOperator \tr {tr}

\newcommand{\I}{\mathds{1}}
\newcommand\floor[1]{\left\lfloor #1 \right\rfloor}
\newcommand\fa{\qquad \text{for all \ }}

\newcommand\mc[1] {\mathcal{#1}}
\newcommand\mbb[1] {\mathds{#1}}

\newcommand{\eps}{\varepsilon}

\hyphenation{Ha-bi-li-ta-ti-ons-schrift}

\setlength{\parindent}{0pt}

\begin{document}

\title[Interior Schauder estimates for equations associated with L\'evy operators]{Interior Schauder estimates for elliptic equations associated with L\'evy operators}
\author[F.~K\"{u}hn]{Franziska K\"{u}hn} 
\address[F.~K\"{u}hn]{TU Dresden, Fachrichtung Mathematik, Institut f\"{u}r Mathematische Stochastik, 01062 Dresden, Germany.}
\email{franziska.kuehn1@tu-dresden.de}
\subjclass{Primary 60G51; Secondary 45K05, 60J35}
\keywords{L\'evy process; integro-differential equation; Schauder estimate; H\"older space; gradient estimate}

\begin{abstract}
	We study the local regularity of solutions $f$ to the integro-differential equation $$ Af=g \quad \text{in $U$}$$ associated with the infinitesimal generator $A$ of a L\'evy process $(X_t)_{t \geq 0}$. Under the assumption that the transition density of $(X_t)_{t \geq 0}$ satisfies a certain gradient estimate, we establish interior Schauder estimates for both pointwise and weak solutions $f$. Our results apply for a wide class of L\'evy generators, including generators of stable L\'evy processes and subordinated Brownian motions.
\end{abstract}
\maketitle

\section{Introduction}

Let $(X_t)_{t \geq 0}$ be a $d$-dimensional L\'evy process. By the L\'evy--Khintchine formula, $(X_t)_{t \geq 0}$ is uniquely characterized (in distribution) by its infinitesimal generator $A$, which is an integro-differential operator with representation
\begin{equation*}
	Af(x) = b \cdot \nabla f(x) + \frac{1}{2} \tr(Q \cdot \nabla^2 f(x)) + \int_{\mbb{R}^d \backslash \{0\}} \left( f(x+y)-f(x)-\nabla f(x) \cdot y \I_{(0,1)}(|y|) \right) \, \nu(dy)
\end{equation*}
for $f \in C_c^{\infty}(\mbb{R}^d)$, here $(b,Q,\nu)$ denotes the L\'evy triplet of $(X_t)_{t \geq 0}$, cf.\ Section~\ref{def}. In this paper, we study the local H\"older regularity of weak and pointwise solutions $f$ to the integro-differential equation \begin{equation*}
	Af = g \quad \text{in $U$}
\end{equation*}
for open sets $U \subseteq \mbb{R}^d$. We are interested in interior Schauder estimates, i.e.\ our aim is to describe the regularity of $f$ on the set $\{x \in U; d(x,U^c)>\delta\}$ for $\delta>0$ and to establish estimates for its H\"older norm. We will see that there is a close connection between the regularity of $f$ and volatility of the L\'evy process: the higher the volatility (caused by a non-vanishing diffusion component or a high small-jump activity), the higher the regularity of $f$. \par
For the particular case that there is no jump part, i.e.\ $\nu=0$, the generator $A$ is a second-order differential operator and interior Schauder estimates for solutions to $Af=g$ are well studied, see e.g.\ Gilbarg \cite{gilbarg}. One of the most prominent non-local L\'evy operators is the fractional Laplacian $-(-\Delta)^{\alpha/2}$, $\alpha \in (0,2)$, defined by
\begin{equation*}
	-(-\Delta)^{\alpha/2} f(x) = c_{d,\alpha} \int_{y \neq 0} \left( f(x+y)-f(x)-\nabla f(x) \cdot y \I_{(0,1)}(|y|) \right) \frac{1}{|y|^{d+\alpha}} \, dy
\end{equation*}
for some normalizing constant $c_{d,\alpha}>0$. The fractional Laplacian is the infinitesimal generator of the isotropic $\alpha$-stable L\'evy process and plays an important role in analysis and probability theory, see e.g.\ the survey paper \cite{kwas17} for a detailed discussion. Global regularity estimates for solutions to $-(-\Delta)^{\alpha/2} f=g$ go back to Stein \cite{stein}, see also Bass \cite{bass08}. Since then, several extensions and refinements of these estimates have been obtained. Ros-Oton \& Serra \cite{ros-oton16} studied the interior H\"older regularity of solutions to equations $Af=g$ associated with symmetric $\alpha$-stable operators and established under a mild degeneracy condition on the spectral measure estimates of the form \begin{equation*}
	\|f\|_{C^{\alpha+\kappa}(B(0,1))} \leq c \left(\|f\|_{C^{\kappa}(\mbb{R}^d)}+\|g\|_{C^{\kappa}(B(0,2))}\right)
\end{equation*}
for $\kappa \geq 0$ such that $\alpha+\kappa$ is not an integer. In the recent paper \cite{reg-levy}, global Schauder estimates \begin{equation}
	\|f\|_{\mc{C}_b^{\alpha+\kappa}(\mbb{R}^d)} \leq c \left(\|f\|_{\infty} + \|g\|_{\mc{C}_b^{\kappa}(\mbb{R}^d)}\right), \quad \kappa \geq 0, \label{intro-eq1}
\end{equation}
were obtained for a wide class of L\'evy processes satisfying a certain gradient estimate, see \eqref{A2} below; here $\mc{C}_b^{\beta}(\mbb{R}^d)$ denotes the H\"older--Zygmund space of order $\beta$, cf.\ Section~\ref{def} for the definition. Moreover, there are numerous results on the regularity of functions which are harmonic with respect to a L\'evy generator, see e.g.\ \cite{kwas18,hansen17,liouville,ryznar15,szt10}. Let us mention that the regularity of solutions integro-differential equations $Af=g$ has been studied, more generally, for classes of L\'evy-type operators, see e.g.\ \cite{bass08,dong12,imbert,jac2,reg-feller,lunardi98,priola18}. Of course, Schauder estimates are also of interest for parabolic equations, we point the interested reader to the recent works \cite{raynal19,hao20,mik} and the references therein.  \par
In this paper, we combine the global Schauder estimates from \cite{reg-levy}, cf.\ \eqref{intro-eq1}, with a truncation technique to derive local H\"older estimates for solutions to $Af=g$.  We will assume that the L\'evy process $(X_t)_{t \geq 0}$ with L\'evy triplet $(b,Q,\nu)$ satisfies the following conditions.
	\begin{enumerate}[label*=\upshape (C\arabic*),ref=\upshape C\arabic*] 
		\item\label{A1} The characteristic exponent $\psi$ satisfies the Hartman--Wintner growth condition \begin{equation*}
			\lim_{|\xi| \to \infty} \frac{|\re \psi(\xi)|}{\log(1+|\xi|)} = \infty.
		\end{equation*}
		\item\label{A2} There exist constants $M>0$ and $\alpha >0$ such that the transition density $p_t$, $t>0$, satisfies the gradient estimate \begin{equation*}
			\int_{\mathbb{R}^d} |\nabla p_t(x)| \, dx \leq M t^{-1/\alpha}, \qquad t \in (0,1).
		\end{equation*}
		\item\label{A3} Either $\alpha>1$ or $Q=0$. Moreover, $\alpha+1 > \gamma$ for a constant $\gamma \in (0,2]$ with $\int_{|y| \leq 1} |y|^{\gamma} \, \nu(dy)<\infty$.
\end{enumerate}
It follows from \eqref{A1},\eqref{A2} that the global Schauder estimate \eqref{intro-eq1} holds, and we will use \eqref{A3} to localize these estimates. Before stating our results, let us give some remarks on  \eqref{A1}-\eqref{A3}. 

\begin{bem} \label{main-0} \begin{enumerate}
	\item If $\alpha>1$, we may choose $\gamma=2$ in \eqref{A3} since $\int_{|y| \leq 1} |y|^{2} \, \nu(dy)<\infty$ holds for any L\'evy measure. 
	\item\label{main-0-i} The Hartman--Wintner condition \eqref{A1} implies that the law of $X_t$, $t>0$, has a density $p_t \in C_b^{\infty}(\mbb{R}^d)$ with respect to Lebesgue measure, see \cite{knop13} for a detailed discussion.
	\item\label{main-0-ii} The constant $\alpha$ in \eqref{A2} is always less or equal than $2$. This follows from the fact that the growth condition $|\psi(\xi)| \leq c(1+|\xi|^{\varrho})$, $\xi \in \mbb{R}^d$, implies $\alpha \leq \varrho$, cf.\ \cite[Remark 3.2(ii)]{reg-levy}. Roughly speaking, $\alpha$ is a measure for the volatility of the process: if $\alpha>0$ is small, then the volatility is small (i.e.\ vanishing diffusion part and few small jumps) and, conversely, if $\alpha$ is close to $2$, then $(X_t)_{t \geq 0}$ has a high volatility (many small jumps or non-vanishing diffusion part).
	\item\label{main-0-iii} If the diffusion matrix $Q$ is positive definite, then \eqref{A1}-\eqref{A3} hold with $\gamma=\alpha=2$, cf.\ Example~\ref{ex-3}. Further classes of L\'evy processes satisfying \eqref{A1}-\eqref{A3} will be presented in Section~\ref{ex}. 
	\item\label{main-0-iv} Condition \eqref{A3} is essentially a balance condition on the growth of $\re \psi$ at infinity. If we set $\beta = \max\{2 \I_{Q \neq 0}, \gamma\}$, then $|\re \psi(\xi)| \leq C(1+|\xi|^{\beta})$. On the other hand, \eqref{A2} means, roughly, that $\re \psi(\xi)\geq c (1+|\xi|^{\alpha})$. Since \eqref{A3} is equivalent to $|\beta-\alpha|<1$, this shows that the lower and upper growth rate of $\re \psi$ at infinity should be sufficiently close to each other, e.g.\ $\psi(\xi,\eta) := |\xi|^{\alpha} + |\eta|^{\beta}$ does not satisfy \eqref{A3} is $|\beta-\alpha|>1$, cf.\ Example~\ref{ex-7}.
\end{enumerate} \end{bem}

Next we state our main results; see Section~\ref{def} for the definition of the notation used in the statements.

\begin{thm} \label{main-1}
	Let $(X_t)_{t \geq 0}$ be a L\'evy process with infinitesimal generator $(A,\mc{D}(A))$ satisfying \eqref{A1}-\eqref{A3}, and denote by $\alpha \in (0,2]$ the constant from \eqref{A2}. Let $f$ be a weak solution to the equation \begin{equation*}
			Af = g \, \, \text{in $U$}
		\end{equation*}
		for an open set $U \subseteq \mbb{R}^d$. Set $U_{\delta} := \{x \in U; d(x,U^c) > \delta\}$ for $\delta>0$.
	\begin{enumerate}
	\item\label{main-1-i} If $f \in L^{\infty}(\mbb{R}^d)$ and $g \in L^{\infty}(U)$, then $f$ has a modification $\tilde{f}$ which is continuous on $U$ and satisfies the interior Schauder estimate \begin{equation}
		\|\tilde{f}\|_{\mc{C}_b^{\alpha}(U_{\delta})} \leq C_{\delta} \left(\|f\|_{L^{\infty}(\mbb{R}^d)} + \|g\|_{L^{\infty}(U)}\right) \label{main-eq11}
	\end{equation}
	for every $\delta>0$. The constant $C_{\delta}$ does not depend on $f$, $g$. 
	\item\label{main-1-ii} If $f \in \mc{C}_b^{\kappa}(\mbb{R}^d)$ and $g \in \mc{C}_b^{\kappa}(U)$ for some $\kappa >0$, then there exists for every $\delta>0$ a constant $C_{\delta}>0$ (independent of $f$, $g$) such that \begin{equation}
		\|f\|_{\mc{C}_b^{\kappa+\alpha}(U_{\delta})} \leq C_{\delta} \left(\|f\|_{\mc{C}_b^{\kappa}(\mbb{R}^d)} + \|g\|_{\mc{C}_b^{\kappa}(U)}\right). \label{main-eq13}
	\end{equation}
\end{enumerate} \end{thm}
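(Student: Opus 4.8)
The plan is to localize the nonlocal equation by multiplying $f$ with a smooth cutoff and to reduce the interior estimate to the \emph{global} Schauder estimate \eqref{intro-eq1}, which holds under \eqref{A1}, \eqref{A2}, applied to the compactly supported function $\chi f$. Since \eqref{intro-eq1} requires $A(\chi f)$ to be globally defined and of class $\mc{C}_b^{\kappa}$, the crux is to control the commutator $A(\chi f)-\chi g$. To make sense of all terms for a merely $L^{\infty}$ (or weakly defined) solution, I would first mollify: because $A$ is translation invariant, $f_{\eps}:=f*\rho_{\eps}$ satisfies $Af_{\eps}=g*\rho_{\eps}$ \emph{classically} on $\{x;\,B(x,\eps)\subseteq U\}$ and is smooth. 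I would prove the estimate for $f_{\eps}$ with constants independent of $\eps$ and then let $\eps\downarrow 0$; in part \eqref{main-1-i} the uniform $\mc{C}_b^{\alpha}$-bound produces the continuous modification $\tilde f$ by equicontinuity.

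For fixed $x_0\in U_{\delta}$ I would choose $\chi\in C_c^{\infty}(B(x_0,r))$, $r\sim\delta$, with $\chi\equiv 1$ on $B(x_0,r/2)$, and use the carr\'e du champ identity
\begin{equation*}
	A(\chi u)=\chi\,Au+u\,A\chi+\Gamma(\chi,u),\qquad \Gamma(\chi,u)(x)=\nabla\chi(x)\cdot Q\nabla u(x)+\int_{\mbb{R}^d\setminus\{0\}}(\chi(x+y)-\chi(x))(u(x+y)-u(x))\,\nu(dy),
\end{equation*}
valid for smooth $u$ and globally in $x$ (for $x\notin\supp\chi$ both sides reduce to $\int\chi(x+y)u(x+y)\,\nu(dy)$). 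Applying it to $u=f_{\eps}$ and inserting $Af_{\eps}=g*\rho_{\eps}$ on $\supp\chi\subseteq U$ gives $A(\chi f_{\eps})=\tilde g_{\eps}$ on all of $\mbb{R}^d$ with
\begin{equation*}
	\tilde g_{\eps}=\chi\,(g*\rho_{\eps})+f_{\eps}\,A\chi+\Gamma(\chi,f_{\eps}).
\end{equation*}

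It then remains to bound $\|\tilde g_{\eps}\|_{\mc{C}_b^{\kappa}(\mbb{R}^d)}$ by $C(\|f\|_{\mc{C}_b^{\kappa}(\mbb{R}^d)}+\|g\|_{\mc{C}_b^{\kappa}(U)})$ uniformly in $\eps$. The term $\chi\,(g*\rho_{\eps})$ is harmless since $\chi\in C_c^{\infty}(U)$, and $f_{\eps}A\chi$ is controlled because $A\chi\in\mc{C}_b^{\infty}(\mbb{R}^d)$ for $\chi\in C_c^{\infty}$. The delicate term is $\Gamma(\chi,f_{\eps})$: splitting the jump integral at $|y|=1$, the far part is bounded via $\nu(\{|y|\geq 1\})<\infty$, while for the near part the factor $\chi(x+y)-\chi(x)=O(|y|)$ together with $f_{\eps}(x+y)-f_{\eps}(x)=O(|y|^{\min(s,1)})$ (for $f_{\eps}\in\mc{C}_b^{s}$) makes the integrand $O(|y|^{1+\min(s,1)})$, which is $\nu$-integrable near $0$ once $1+\min(s,1)\geq\gamma$, by \eqref{A3}. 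Thus the jump commutator behaves like an operator of order $(\gamma-1)_+$ and the diffusion commutator $\nabla\chi\cdot Q\nabla f_{\eps}$ like one of order $1$, and by \eqref{A3} both orders are strictly below $\alpha$.

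This gap is what drives the argument, and closing the induced bootstrap is the main obstacle. If $f_{\eps}\in\mc{C}_b^{s}$ on a ball, the estimates above give $\tilde g_{\eps}\in\mc{C}_b^{\min(\kappa,\,s-1,\,s-(\gamma-1)_+)}$ there, so \eqref{intro-eq1} upgrades $\chi f_{\eps}$ to regularity $s+(\alpha-\max\{\I_{Q\neq 0},(\gamma-1)_+\})$ before the cap at $\kappa+\alpha$. Since \eqref{A3} forces the increment $\alpha-\max\{\I_{Q\neq 0},(\gamma-1)_+\}$ to be strictly positive (this is exactly the balance from Remark~\ref{main-0-iv}), iterating on a shrinking sequence of balls $B(x_0,r/2^{j})$ with nested cutoffs reaches $\mc{C}_b^{\kappa+\alpha}$ after finitely many steps; restricting to $B(x_0,r/2)$ where $\chi\equiv 1$ then yields the local bound. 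For part \eqref{main-1-i} the same scheme is run from $s=0$, the mollification guaranteeing that $\nabla\chi\cdot Q\nabla f_{\eps}$ and the near jump integral are defined, and the bootstrap first creates positive H\"older regularity and finally the $\mc{C}_b^{\alpha}$-bound \eqref{main-eq11}. Covering $U_{\delta}$ by such balls and assembling the local H\"older--Zygmund seminorms via a partition of unity produces \eqref{main-eq11} and \eqref{main-eq13}; the hard point throughout is that the derivative losses from the diffusion and small-jump interactions in $\Gamma(\chi,f)$ must be strictly dominated by the regularizing gain $\alpha$, which is precisely \eqref{A3}.
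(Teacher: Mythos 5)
Your overall localization skeleton --- mollify using translation invariance, cut off, apply the Carr\'e du champ identity and the global estimate \eqref{intro-eq1}, with \eqref{A3} controlling the commutator --- is exactly the paper's (its Lemma~\ref{p-2} and Lemma~\ref{p-1} are your two preparatory steps). The genuine gap is in how you close the estimate: your bootstrap cannot get started precisely in the cases where the commutator $\Gamma(\chi,f_\eps)$ has positive order. Note that the integrability condition you invoke, $1+\min(s,1)\geq\gamma$, is a condition on the \emph{current} regularity $s$; it is not granted by \eqref{A3}, which only guarantees $1+\alpha>\gamma$. For part \eqref{main-1-i} you start at $s=0$: if $\gamma>1$ (allowed under \eqref{A3}, e.g.\ any $\beta$-stable process with $\beta\in(1,2)$ and $\gamma$ slightly above $\beta$), the near-jump integrand is only $O(|y|)\cdot 2\|f\|_\infty$ and $\int_{|y|\leq 1}|y|\,\nu(dy)=\infty$ is possible, so $\|\Gamma(\chi,f_\eps)\|_\infty$ is \emph{not} controlled by $\|f\|_\infty$ at all; it is controlled only through $\|\nabla f_\eps\|_\infty\sim\eps^{-1}\|f\|_\infty$ (similarly the diffusion term $\nabla\chi\cdot Q\nabla f_\eps$ when $Q\neq 0$). ``Defined after mollification'' is not the same as ``uniformly bounded in $\eps$'': the first application of \eqref{intro-eq1} then carries a constant of size $\eps^{-1}$, and every later bootstrap step inherits it, since each step's constant contains the previous step's norm bound and no small factor ever multiplies the bad term. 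So the scheme never yields the $\eps$-uniform $\mc{C}_b^\alpha$ bound needed for Arzel\`a--Ascoli and the modification $\tilde f$. The same start-up failure occurs in part \eqref{main-1-ii} whenever $\kappa<\max\{\I_{Q\neq 0},(\gamma-1)_+\}$, e.g.\ $Q=0$, $\alpha=3/2$, $\gamma=9/5$, $\kappa=1/10$, which is admissible under \eqref{A1}--\eqref{A3}.

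The paper closes the loop by a different mechanism, and this is the real content of its Proposition~\ref{p-3}: for $f\in C_b^2(\mbb{R}^d)$ the commutator is estimated not through $\|\nabla f\|_\infty$ or a fixed H\"older norm of $f$, but (via Lemma~\ref{def-11} and the elementary inequalities \eqref{p-eq6}, \eqref{p-eq7}) through \emph{weighted interior seminorms} $|f|_{B(0,2),\alpha-\eps,\varrho}$ of order $\alpha-\eps$ strictly below $\alpha$; here \eqref{A3} enters exactly to ensure that the orders $1$ (diffusion) and $\gamma-1$ (small jumps) sit strictly below $\alpha$, leaving room for the $\eps$. The interpolation inequality \eqref{p-eq22}, $|f|_{B(0,2),\alpha-\eps,\varrho}\leq\delta|f|_{B(0,2),\alpha,\varrho}+c_1\|f\|_\infty$, then lets one \emph{absorb} this term into the left-hand side; the absorption is legitimate because these seminorms are a priori finite for $C_b^2$ functions. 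The resulting a priori estimate involves only $\|f\|_\infty$ and the local sup norm of $Af$, so applying it to the mollified solutions $f_k$ gives bounds uniform in $k$ --- the uniformity your bootstrap is missing. (Also, for part \eqref{main-1-ii} the paper does not iterate regularity increments at all: it applies part \eqref{main-1-i} to the iterated differences $\Delta_h^N f_k$, using that $A$ commutes with translations, which sidesteps tracking the H\"older regularity of the commutator.) To salvage your scheme you would have to replace the bootstrap by such a self-improvement/absorption argument with matching quantities on both sides --- at which point you have essentially reconstructed the paper's proof.
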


Theorem~\ref{main-1} applies for a wide class of L\'evy processes. It generalizes the interior Schauder estimates for stable processes obtained in \cite{ros-oton16}, and for the particular case that there is no jump part, i.e.\ $\nu=0$, we recover the classical regularity estimates for second-order differential operators with constant coefficients, see Section~\ref{ex} for details and further examples.

\begin{bem} \label{main-2} \begin{enumerate}
	\item\label{main-2-1} The weak solution $f \in L^{\infty}(\mbb{R}^d)$ to $Af=g$ is only determined up to a Lebesgue null set. The interior Schauder estimate \eqref{main-eq11} implies continuity of $\tilde{f}$ on $U$, and so \eqref{main-eq11} cannot hold for any representative $\tilde{f}$ of $f$ but only for a suitably chosen representative.
	\item\label{main-2-v} In the proof of Theorem~\ref{main-2} we use \eqref{A1},\eqref{A2} only to obtain from \cite[Theorem 1.1]{reg-levy} the global Schauder estimate \begin{equation}
		\|f\|_{\mc{C}_b^{\alpha}(\mbb{R}^d)} \leq c \left( \|f\|_{\infty} + \|Af\|_{\infty} \right), \qquad f \in C_b^2(\mbb{R}^d). \label{main-eq15}
	\end{equation}
	Consequently, the interior Schauder estimates in Theorem~\ref{main-2} hold for any L\'evy process $(X_t)_{t \geq 0}$ with generator $(A,\mc{D}(A))$ satisfying the global Schauder estimate \eqref{main-eq15} and the balance condition \eqref{A3}. This means that (the proof of) Theorem~\ref{main-2} actually gives a general procedure to localize Schauder estimates.
	\item\label{main-2-iv} If we interpret the constant $\alpha$ from \eqref{A2} as a measure for the volatility of $(X_t)_{t \geq 0}$, cf.\ Remark~\ref{main-0}\eqref{main-0-ii}, then Theorem~\ref{main-1} shows that a high volatility of the L\'evy process $(X_t)_{t \geq 0}$ results in a high regularity of $f|_U$. This is a natural result, and we believe the regularity estimates to be optimal for many L\'evy processes. In some cases, certain properties of the L\'evy process or the L\'evy triplet may lead to an additional smoothing effect; for instance, Grzywny \& Kwa\'snicki \cite[Theorem 1.7]{kwas18} studied the regularity of harmonic functions $f$ (i.e.\ $Af=0$) associated with unimodal L\'evy processes and showed that the regularity of the density of the L\'evy measure $\nu$ carries over to $f$; this regularity of $f$ is not related to the volatility of the process.
	\item\label{main-2-ii} In general, the assumption $f \in \mc{C}_b^{\kappa}(\mbb{R}^d)$ in Theorem~\ref{main-1}\eqref{main-1-ii} cannot be relaxed to $f \in \mc{C}_b^{\kappa}(U)$; for stable processes a counterexample can be found in \cite[Proposition 6.1]{ros-oton16}.
	\item\label{main-2-iii} If $f$ is in the domain of the strong infinitesimal generator of $(X_t)_{t \geq 0}$, then \cite[Theorem 1.1]{reg-levy} gives $f \in \mc{C}_b^{\alpha}(\mbb{R}^d)$, and so the assumption $f \in \mc{C}_b^{\kappa}(\mbb{R}^d)$ in Theorem~\ref{main-1}\eqref{main-1-ii} is automatically satisfied for $\kappa \leq \alpha$, see also Corollary~\ref{main-5} below.
\end{enumerate} \end{bem}

Our second main result gives interior Schauder estimates for pointwise solutions to the equation $Af=g$.

\begin{kor} \label{main-3}
	Let $(X_t)_{t \geq 0}$ be a L\'evy process satisfying \eqref{A1}-\eqref{A3}, and let $U \subseteq \mbb{R}^d$ be an open set. Let $f \in \mc{B}_b(\mbb{R}^d)$ be a function such that \begin{equation*}
		 g(x) := \lim_{t \to 0} \frac{\mbb{E}f(x+X_t)-f(x)}{t}
	\end{equation*}
	exists for all $x \in U$ and assume that \begin{equation}
		\sup_{x \in K} \sup_{t \in (0,1)} \left| \frac{\mbb{E}f(x+X_t)-f(x)}{t} \right| < \infty \label{main-eq23}
	\end{equation}
	for any compact set $K \subseteq U$. Denote by $\alpha \in (0,2]$ the constant from \eqref{A2}. \begin{enumerate}
		\item\label{main-3-i} If $g \in L^{\infty}(U)$ then there exists for any $\delta>0$ a constant $C_{\delta}>0$ (independent of $f$, $g$) such that
		\begin{equation*}
			\|f\|_{\mc{C}_b^{\alpha}(U_{\delta})} \leq C_{\delta} \left(\|f\|_{L^{\infty}(\mbb{R}^d)} + \|g\|_{L^{\infty}(U)}\right).
		\end{equation*}
		\item\label{main-3-ii}  If $f \in \mc{C}_b^{\kappa}(\mbb{R}^d)$ and $g \in \mc{C}_b^{\kappa}(U)$ for some $\kappa >0$ then there exists for any $\delta>0$ a constant $C_{\delta}>0$ (independent of $f$, $g$) such that \begin{equation*}
		\|f\|_{\mc{C}_b^{\kappa+\alpha}(U_{\delta})} \leq C_{\delta} \left(\|f\|_{\mc{C}_b^{\kappa}(\mbb{R}^d)} + \|g\|_{\mc{C}_b^{\kappa}(U)}\right).
		\end{equation*}
	\end{enumerate}
\end{kor}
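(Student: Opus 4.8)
The plan is to deduce Corollary~\ref{main-3} from Theorem~\ref{main-1} by showing that a pointwise solution $f$ in the sense of the corollary is automatically a weak solution to $Af=g$ in $U$. Write $P_t f(x):=\mbb{E}f(x+X_t)=\int_{\mbb{R}^d} f(x+y)\,\mu_t(dy)$ for the transition semigroup, where $\mu_t=\law(X_t)$, and let $A^*$ denote the formal adjoint of $A$, i.e.\ the generator of the dual L\'evy process $(-X_t)_{t\ge0}$, which enters the definition of weak solution. Writing $\tilde P_t\varphi(x):=\mbb{E}\varphi(x-X_t)$ for the corresponding dual semigroup, the duality identity $\int_{\mbb{R}^d}(P_tf)\varphi\,dx=\int_{\mbb{R}^d} f\,(\tilde P_t\varphi)\,dx$ is immediate from Fubini and the substitution $z=x+y$. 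I fix a test function $\varphi\in C_c^\infty(U)$ with $\supp\varphi\subseteq K$ for a compact set $K\subseteq U$ and pass to the limit $t\to0$ in $\int_{\mbb{R}^d} t^{-1}(P_tf-f)\varphi\,dx$ from both sides.

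For the $g$-side, the local uniform bound \eqref{main-eq23} says that the difference quotients $t^{-1}(P_tf(x)-f(x))$ are bounded on $K$ uniformly in $t\in(0,1)$ and converge pointwise on $U$ to $g$; since $\varphi$ is supported in $K$, dominated convergence gives $\int_U t^{-1}(P_tf-f)\varphi\,dx\to\int_U g\varphi\,dx$. For the test-function side, the duality identity rewrites the integral as $\int_{\mbb{R}^d} f\,t^{-1}(\tilde P_t\varphi-\varphi)\,dx$. Here one first checks $A^*\varphi\in L^1(\mbb{R}^d)$: its local part has compact support, its small-jump part is supported in a bounded neighbourhood of $K$ and controlled by $\tfrac12\|\nabla^2\varphi\|_\infty\int_{|y|\le1}|y|^2\,\nu(dy)$, and its large-jump part is bounded in $L^1$ by $2\|\varphi\|_{L^1}\,\nu(\{|y|>1\})$. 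Since $\varphi\in C_c^\infty(U)\subseteq\dom(A^*)$, the semigroup identity $\tilde P_t\varphi-\varphi=\int_0^t \tilde P_s(A^*\varphi)\,ds$ combined with the $L^1$-contractivity and strong continuity of $(\tilde P_t)_{t\ge0}$ on $L^1(\mbb{R}^d)$ upgrades the pointwise convergence to $t^{-1}(\tilde P_t\varphi-\varphi)\to A^*\varphi$ in $L^1(\mbb{R}^d)$. As $f\in L^\infty(\mbb{R}^d)$, pairing against $f$ then gives $\int_{\mbb{R}^d} f\,t^{-1}(\tilde P_t\varphi-\varphi)\,dx\to\int_{\mbb{R}^d} f\,A^*\varphi\,dx$. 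Equating both limits yields $\int_{\mbb{R}^d} f\,A^*\varphi\,dx=\int_U g\varphi\,dx$ for all $\varphi\in C_c^\infty(U)$, so $f$ is a weak solution to $Af=g$ in $U$.

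Part \eqref{main-3-ii} now follows directly: the hypotheses $f\in\mc{C}_b^{\kappa}(\mbb{R}^d)$ and $g\in\mc{C}_b^{\kappa}(U)$ are exactly those of Theorem~\ref{main-1}\eqref{main-1-ii}, and since $f$ is continuous it coincides with the (a.e.-unique) Hölder representative, so \eqref{main-eq13} applies verbatim. For part \eqref{main-3-i} one additional step is needed, namely passing from the continuous modification $\tilde f$ furnished by Theorem~\ref{main-1}\eqref{main-1-i} to $f$ itself. I would argue that $f$ is already continuous on $U$: for compact $K\subseteq U$, \eqref{main-eq23} yields $\sup_{x\in K}|P_tf(x)-f(x)|\le C_K\,t\to0$, while each $P_tf$ is continuous (indeed smooth) because $p_t\in C_b^\infty(\mbb{R}^d)$ by Remark~\ref{main-0}\eqref{main-0-i}. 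Hence $f$ is a locally uniform limit on $U$ of continuous functions and therefore continuous on $U$; being continuous and a.e.\ equal to the continuous modification $\tilde f$, it satisfies $f=\tilde f$ on $U$, and the estimate of Theorem~\ref{main-1}\eqref{main-1-i} transfers to $f$.

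The main obstacle is the passage to the limit in the weak formulation, and in particular the two integrability facts it rests on. The bound \eqref{main-eq23} is precisely what is required to justify dominated convergence on the $g$-side, whereas on the test-function side one must verify $A^*\varphi\in L^1(\mbb{R}^d)$ and strengthen the pointwise convergence of the difference quotients to $L^1$-convergence, since $f$ is only bounded and cannot be paired with a quantity that converges merely pointwise. Once these are in hand, the remainder is a direct invocation of Theorem~\ref{main-1}.
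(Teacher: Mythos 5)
Your proposal is correct, and its skeleton is the same as the paper's: show that the pointwise solution is a weak solution by pairing with $\varphi \in C_c^{\infty}(U)$, using \eqref{main-eq23} and dominated convergence on the $g$-side and the duality $\int (P_t f)\varphi \, dx = \int f \, (\tilde{P}_t \varphi) \, dx$ on the test-function side, and then invoke Theorem~\ref{main-1}. Two points differ, both in your favour or at worst neutral. First, the paper simply asserts the limit $\tfrac{1}{t}\int f\,(\tilde{P}_t\varphi - \varphi)\,dx \to \int f \, A^*\varphi\,dx$; your justification --- $A^*\varphi \in L^1(\mbb{R}^d)$, the identity $\tilde{P}_t\varphi - \varphi = \int_0^t \tilde{P}_s (A^*\varphi)\,ds$, and $L^1$-contractivity plus strong continuity, so that the difference quotients converge in $L^1$ and can be paired with the merely bounded $f$ --- fills in exactly the step the paper leaves implicit. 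Second, and more substantively, for part \eqref{main-3-i} the paper identifies $f$ with the continuous modification $\tilde{f}$ via a separate result (Lemma~\ref{p-5}): there one sets $u := \tilde{f}\I_U + f \I_{U^c}$, uses the density existence from \eqref{A1} to get $\mbb{E}u(x+X_t) = \mbb{E}f(x+X_t)$, and then uses right-continuity of the sample paths to show $\mbb{E}u(x+X_t) \to u(x)$ at continuity points of $u$. You instead prove directly that $f$ is continuous on $U$: by \eqref{main-eq23} the convergence $P_t f \to f$ is locally uniform (at rate $O(t)$), and each $P_t f$ is continuous because \eqref{A1} provides a density $p_t \in L^1(\mbb{R}^d)$, so $P_t f$ is a convolution of an $L^\infty$ function with an $L^1$ kernel --- note that this, rather than the smoothness of $p_t$ you allude to, is the precise fact needed. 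Your route avoids any sample-path argument but uses the locally uniform bound \eqref{main-eq23}, which Lemma~\ref{p-5} does not need (it requires only pointwise convergence $\mbb{E}f(x+X_t) \to f(x)$); since \eqref{main-eq23} is a standing hypothesis of the corollary, this loss of generality is immaterial here, and your argument is more elementary and self-contained.
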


As an immediate consequence, we obtain local Schauder estimates for functions in the domain in the strong infinitesimal generator. They extend in a natural way the global Schauder estimates from \cite{reg-levy}.

\begin{kor} \label{main-5}
	Let $(X_t)_{t \geq 0}$ be a L\'evy process satisfying \eqref{A1}-\eqref{A3}, and denote by $\alpha \in (0,2]$ the constant from \eqref{A2}. Let $f$ be a function in the domain of the strong infinitesimal generator, i.e.\ $f$ is a continuous function vanishing at infinity and the limit \begin{equation*}
		Af(x) := \lim_{t \to 0} \frac{\mbb{E} f(x+X_t)-f(x)}{t}
	\end{equation*}
	exists uniformly in $x \in \mbb{R}^d$. \begin{enumerate}
		\item\label{main-5-i} For each $\delta>0$ there exists a finite constant $C_{\delta}$ (not depending on $f$) such that \begin{equation*}
			\|f\|_{\mc{C}_b^{\alpha}(B(x,\delta))} \leq C_{\delta} \left(\|f\|_{\infty} + \|Af\|_{\infty,B(x,2\delta)} \right)
		\end{equation*}
		for all $x \in \mbb{R}^d$.
		\item\label{main-5-ii} If $Af \in \mc{C}_b^{\kappa}(\mbb{R}^d)$ for some $\kappa>0$, then $f \in \mc{C}_b^{\kappa+\alpha}(\mbb{R}^d)$ and \begin{equation*}
			\|f\|_{\mc{C}_b^{\alpha+\kappa}(B(x,\delta))} \leq C_{\delta} \left( \|f\|_{\infty} + \|Af\|_{\infty}+ \|Af\|_{\mc{C}_b^{\kappa}(B(x,2\delta))}  \right), \quad x \in \mbb{R}^d,\, \delta>0,
		\end{equation*}
		for a finite constant $C_{\delta}$, which does not depend on $f$, $g$.
	\end{enumerate}
\end{kor}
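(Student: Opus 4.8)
The plan is to read off both statements from Corollary~\ref{main-3} applied on balls $U=B(x,2\delta)$, for which $U_\delta=B(x,\delta)$, the one extra ingredient being the translation invariance of the L\'evy process, which is what forces the constant $C_\delta$ to be independent of the centre $x$. First I would verify that a function $f$ in the domain of the strong generator satisfies the hypotheses of Corollary~\ref{main-3}. Such an $f$ is bounded and continuous, so $f\in\mc{B}_b(\mbb{R}^d)$, and by assumption $g(x)=Af(x)=\lim_{t\to0}t^{-1}(\mbb{E}f(x+X_t)-f(x))$ exists for every $x$; being a uniform limit of continuous functions, $g=Af$ is continuous and bounded, hence $g\in L^\infty(U)$ for part~\eqref{main-5-i}, and under the assumption $Af\in\mc{C}_b^\kappa(\mbb{R}^d)$ of part~\eqref{main-5-ii} also $g\in\mc{C}_b^\kappa(U)$ with $U=B(x,2\delta)$. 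The only condition needing an argument is \eqref{main-eq23}: by uniform convergence the difference quotients are, for small $t$, uniformly close to the bounded function $Af$, while for $t$ bounded away from $0$ the crude bound $|\mbb{E}f(x+X_t)-f(x)|\le 2\|f\|_\infty$ controls them; together these give a bound on $\sup_x\sup_{t\in(0,1)}|t^{-1}(\mbb{E}f(x+X_t)-f(x))|$ that is uniform in $x$, which is even stronger than \eqref{main-eq23}.

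For the uniform constant I would use that the transition semigroup of a L\'evy process commutes with translations: writing $\tau_x h:=h(\cdot+x)$, one has $A(\tau_x f)=\tau_x(Af)$, and $\tau_x f$ again lies in the domain of the strong generator. Applying Corollary~\ref{main-3} to $\tau_x f$ on the \emph{fixed} reference ball $B(0,2\delta)$ produces a constant $C_\delta$ depending only on $\delta$, and since the supremum and H\"older--Zygmund norms are invariant under $\tau_x$, translating back recovers the estimates centred at $x$ with the same $C_\delta$. For part~\eqref{main-5-i} this gives directly
\[ \|f\|_{\mc{C}_b^{\alpha}(B(x,\delta))}\le C_\delta\bigl(\|f\|_\infty+\|Af\|_{\infty,B(x,2\delta)}\bigr). \]

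For part~\eqref{main-5-ii} the same translation argument applied to Corollary~\ref{main-3}\eqref{main-3-ii} yields
\[ \|f\|_{\mc{C}_b^{\kappa+\alpha}(B(x,\delta))}\le C_\delta\bigl(\|f\|_{\mc{C}_b^{\kappa}(\mbb{R}^d)}+\|Af\|_{\mc{C}_b^{\kappa}(B(x,2\delta))}\bigr), \]
so it remains to bound the global term $\|f\|_{\mc{C}_b^\kappa(\mbb{R}^d)}$ by $\|f\|_\infty+\|Af\|_\infty$. The global Schauder estimate \eqref{main-eq15} (cf.\ Remark~\ref{main-2}\eqref{main-2-iii}) gives $f\in\mc{C}_b^\alpha(\mbb{R}^d)$ with $\|f\|_{\mc{C}_b^\alpha(\mbb{R}^d)}\le c(\|f\|_\infty+\|Af\|_\infty)$; for $\kappa\le\alpha$ this is exactly what is needed, and the global membership $f\in\mc{C}_b^{\kappa+\alpha}(\mbb{R}^d)$ claimed in~\eqref{main-5-ii} is read off directly from \cite[Theorem~1.1]{reg-levy} since $Af\in\mc{C}_b^\kappa(\mbb{R}^d)$.

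I expect two delicate points. The conceptual one is the independence of $C_\delta$ from $x$, which rests entirely on translation invariance of the process and of the norms; this is the only place where the L\'evy structure of $A$, rather than the abstract output of Corollary~\ref{main-3}, is used. The technical one is the replacement of $\|f\|_{\mc{C}_b^\kappa(\mbb{R}^d)}$ for $\kappa>\alpha$: here the clean bound $\|f\|_{\mc{C}_b^\kappa(\mbb{R}^d)}\le c(\|f\|_\infty+\|Af\|_\infty)$ is no longer available from the zeroth-order estimate alone, and one must feed in the higher-order global Schauder estimate \eqref{intro-eq1} together with an interpolation of $\mc{C}_b^\kappa$ between $L^\infty$ and $\mc{C}_b^{\kappa+\alpha}$; keeping the right-hand side in precisely the stated local form is the step that would require the most care.
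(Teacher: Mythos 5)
Your reduction is exactly the paper's: both parts are obtained by applying Corollary~\ref{main-3} with $U=B(x,2\delta)$, so that $U_{\delta}=B(x,\delta)$. Your verification of the hypotheses of Corollary~\ref{main-3} (continuity and boundedness of $Af$, and \eqref{main-eq23} via uniform convergence for small $t$ plus the crude bound $2\|f\|_{\infty}/t_0$ for $t\geq t_0$) is precisely what the paper compresses into the word ``immediate''. Your translation argument for the uniformity of $C_{\delta}$ in the centre $x$ is correct, though not strictly needed: the constant produced by Proposition~\ref{p-3} --- and hence the one in Theorem~\ref{main-1} and Corollary~\ref{main-3} --- already depends only on $\delta$ and not on $U$ or the centre, for exactly the translation-invariance reason you cite. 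For part~\eqref{main-5-ii} with $\kappa\leq\alpha$, your argument (bound $\|f\|_{\mc{C}_b^{\kappa}(\mbb{R}^d)}\leq\|f\|_{\mc{C}_b^{\alpha}(\mbb{R}^d)}\leq c(\|f\|_{\infty}+\|Af\|_{\infty})$ by \eqref{main-eq15}, then apply Corollary~\ref{main-3}\eqref{main-3-ii}) is complete and coincides in substance with the paper's.

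The difficulty you flag for $\kappa>\alpha$ is a genuine gap in your write-up, but you should know that the paper's own proof does not close it either. The paper asserts $\|f\|_{\mc{C}_b^{\kappa}(\mbb{R}^d)}\leq\|f\|_{\mc{C}_b^{\kappa+\alpha}(\mbb{R}^d)}\leq c\left(\|f\|_{\infty}+\|Af\|_{\infty}\right)$, citing \cite[Theorem 1.1]{reg-levy}; but that theorem, as quoted in \eqref{intro-eq1}, gives $\|f\|_{\mc{C}_b^{\kappa+\alpha}(\mbb{R}^d)}\leq c(\|f\|_{\infty}+\|Af\|_{\mc{C}_b^{\kappa}(\mbb{R}^d)})$, and a bound by sup norms alone is impossible by scaling: for the fractional Laplacian, $f_{\lambda}:=\lambda^{-\kappa}\phi(\lambda\,\cdot)$ with $\phi\in C_c^{\infty}$ nonconstant has $\|f_{\lambda}\|_{\infty}+\|Af_{\lambda}\|_{\infty}\to0$ as $\lambda\to\infty$ while $\|f_{\lambda}\|_{\mc{C}_b^{\kappa}(\mbb{R}^d)}$ stays bounded away from zero (note $\|Af_{\lambda}\|_{\infty}=\lambda^{\alpha-\kappa}\|A\phi\|_{\infty}$, which tends to $0$ precisely when $\kappa>\alpha$). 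With the corrected quotation, both your argument and the paper's yield the interior estimate with the \emph{global} H\"older norm $\|Af\|_{\mc{C}_b^{\kappa}(\mbb{R}^d)}$ on the right-hand side, which matches the stated corollary only for $\kappa\leq\alpha$. Your proposed repair via interpolation ($\|f\|_{\mc{C}_b^{\kappa}}\leq\eps\|f\|_{\mc{C}_b^{\kappa+\alpha}}+C_{\eps}\|f\|_{\infty}$) runs into the same wall: the term $\eps\|f\|_{\mc{C}_b^{\kappa+\alpha}(\mbb{R}^d)}$ is global, so absorbing it requires taking a supremum over all centres, after which $\|Af\|_{\mc{C}_b^{\kappa}(\mbb{R}^d)}$ reappears. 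Closing the case $\kappa>\alpha$ in the stated local form would require a genuinely new ingredient --- in effect a higher-order analogue of Proposition~\ref{p-3} --- which neither you nor the paper provides; so your proposal reproduces the paper's proof, including its weak point, and is more candid about where that weak point lies.
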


The proof of Corollary~\ref{main-5} shows that the interior Schauder estimates \eqref{main-5-i},\eqref{main-5-ii} actually hold for any L\'evy process $(X_t)_{t \geq 0}$ with generator $(A,\mc{D}(A))$ satisfying \eqref{A3} and the global Schauder estimate 
\begin{equation*}
		\|f\|_{\mc{C}_b^{\alpha}(\mbb{R}^d)} \leq c \left( \|f\|_{\infty} + \|Af\|_{\infty} \right), \qquad f \in C_b^2(\mbb{R}^d),
\end{equation*}
see Remark~\ref{main-2}\eqref{main-2-v}. \par \medskip

This paper is organized as follows. In Section~\ref{def} we introduce basic definitions and notation. Our main results are proved in Section~\ref{p}, and examples are presented in Section~\ref{ex}.

\section{Definitions} \label{def}

We consider the $d$-dimensional Euclidean space $\mbb{R}^d$ with the canonical scalar product $x \cdot y := \sum_{j=1}^d x_j y_j$ and the Borel $\sigma$-algebra $\mc{B}(\mbb{R}^d)$ generated by the open balls $B(x,r)$. If $f$ is a real-valued function, then $\spt f$ denotes its support, $\nabla f$ the gradient and $\nabla^2 f$ the Hessian of $f$. For $\alpha \geq 0$ we set \begin{equation*}
	\lfloor \alpha \rfloor := \max\{k \in \mbb{N}_0; k \leq \alpha\}. 
\end{equation*}

\emph{Function spaces:} $\mc{B}_b(\mbb{R}^d)$ is the space of bounded Borel measurable functions $f: \mbb{R}^d \to \mbb{R}$. The smooth functions with compact support are denoted by $C_c^{\infty}(\mbb{R}^d)$. Superscripts $k\in\mbb{N}$ are used to denote the order of differentiability, e.g.\ $f \in C_{b}^k(\mbb{R}^d)$ means that $f$ and its derivatives up to order $k$ are bounded continuous functions. For $U \subseteq \mbb{R}^d$ we set \begin{equation*}
	\|f\|_{\infty,U} := \sup_{x \in U} |f(x)| \quad \text{and} \quad \|f\|_{\infty} := \|f\|_{\infty,\mbb{R}^d}.
\end{equation*}
For every $\alpha \geq 0$ and every open set $U \subseteq \mbb{R}^d$ we define the \emph{H\"older--Zygmund space} $\mc{C}_b^{\alpha}(U)$ by 
\begin{equation*}
	\mc{C}_b^{\alpha}(U) := \left\{f \in C_b(U); \|f\|_{\mc{C}_b^{\alpha}(U)} := \sup_{x \in U} |f(x)|+ \sup_{x \in U} \sup_{0<|h| < \min\{1,r_x/k\}} \frac{|\Delta_h^k f(x)|}{|h|^{\alpha}} < \infty \right\}
\end{equation*}
where $r_x := d(x,U^c)$ is the distance of $x$ from the complement of $U$, $k \in \mbb{N}$ is the smallest natural number strictly larger than $\alpha$ and
\begin{equation*}
	\Delta_h f(x) := f(x+h) - f(x) \qquad \Delta_h^{\ell} f(x) := \Delta_h (\Delta_h^{\ell-1} f)(x), \quad \ell \geq 2
\end{equation*}
are \emph{iterated difference operators}. For $U=\mbb{R}^d$ it is known that replacing $k$ by an arbitrary number $j$ strictly larger than $\alpha$ gives an equivalent norm, i.e. \begin{equation}
	\|f\|_{\mc{C}_b^{\alpha}(\mbb{R}^d)} =\|f\|_{\infty} + \sup_{x \in \mbb{R}^d} \sup_{0<|h| < 1} \frac{|\Delta_h^k f(x)|}{|h|^{\alpha}} \asymp \|f\|_{\infty} +  \sup_{x \in \mbb{R}^d} \sup_{0<|h| < 1} \frac{|\Delta_h^{j} f(x)|}{|h|^{\alpha}}, \label{def-eq5}
\end{equation}
cf.\ \cite[Theorem 2.7.2.2]{triebel78}. We need a localized version of this result.
\begin{lem} \label{def-11}
	Let $\alpha \in (0,\infty)$, and let $U \subseteq \mbb{R}^d$ be open. The following statements hold for any $j \geq k := \lfloor \alpha \rfloor+1$: \begin{enumerate}
		\item\label{def-11-i} There exists a constant $c>0$ such that \begin{equation*}
			\sup_{0<|h| \leq r} \frac{|\Delta_h^k f(x)|}{|h|^{\alpha}} 
			\leq c r^{-\alpha} \|f\|_{\infty,U} + c \sup_{0<|h| \leq r/j} \sup_{z \in B(x,r(k+1))} \frac{|\Delta_h^j f(z)|}{|h|^{\alpha}} 
		\end{equation*}
		for all $f \in C_b(U)$, $r >0$, and $x \in U$ with $B(x,r(k+2)) \subseteq U$.
		\item\label{def-11-ii} If $\alpha>1$ then there exists a constant $c>0$ such that \begin{equation*}
			|\nabla f(x)|
			\leq c r^{-\alpha} \|f\|_{\infty,U} + c \sup_{0<|h| \leq r/j} \sup_{z \in B(x,r(k+1))} \frac{|\Delta_h^j f(z)|}{|h|^{\alpha}} 
		\end{equation*}
		for all $f \in C_b^1(U)$, $r >0$ and $x \in U$ with $B(x,r(k+2)) \subseteq U$.
	\end{enumerate}
\end{lem}

We defer the proof of Lemma~\ref{def-11} to the appendix. For $\alpha \in (0,\infty) \backslash \mbb{N}$ the H\"older--Zygmund space $\mc{C}_b^{\alpha}(\mbb{R}^d)$ coincides with the ``classical'' H\"older space $C_b^{\alpha}(\mbb{R}^d)$, cf.\ \cite[Theorem 2.7.2.1]{triebel78}. If $\alpha=k \in \mbb{N}$, then the inclusion $C_b^{k}(\mbb{R}^d) \subseteq \mc{C}_b^k(\mbb{R}^d)$ is strict.

\emph{L\'evy processes:} Let $(\Omega,\mc{A},\mbb{P})$ be a probability space. A stochastic process $X_t: \Omega \to \mbb{R}^d$, $t \geq 0$, is a ($d$-dimensional) \emph{L\'evy process} if $X_0=0$ almost surely, $(X_t)_{t \geq 0}$ has independent and stationary increments and $t \mapsto X_t(\omega)$ is right-continuous with finite left-hand limits for almost all $\omega \in \Omega$. The L\'evy--Khintchine formula shows that every L\'evy process is uniquely determined in distribution by its \emph{characteristic exponent} $\psi: \mbb{R}^d \to \mbb{C}$ satisfying \begin{equation*}
	\mbb{E}\exp(i \xi \cdot X_t) = \exp(-t \psi(\xi)), \qquad t \geq 0, \, \xi \in \mbb{R}^d.
\end{equation*}
The characteristic exponent $\psi$ has a \emph{L\'evy--Khintchine representation} \begin{equation*}
	\psi(\xi) = -ib \cdot \xi + \frac{1}{2} \xi \cdot Q \xi + \int_{y \neq 0} \left( 1-e^{iy \cdot \xi} + iy \cdot \xi \I_{(0,1)}(|y|) \right) \, \nu(dy), \quad \xi \in \mbb{R}^d,
\end{equation*}
where the \emph{L\'evy triplet} $(b,Q,\nu)$ consists of a vector $b \in \mbb{R}^d$ (\emph{drift vector}), a symmetric positive semi-definite matrix $Q \in \mbb{R}^{d \times d}$ (\emph{diffusion matrix}) and a measure $\nu$ on $\mbb{R}^d \backslash \{0\}$ with $\int_{y \neq 0} \min\{1,|y|^2\} \, \nu(dy)<\infty$  (\emph{L\'evy measure}). Our standard reference for L\'evy processes is the monograph \cite{sato} by Sato. By the independence and stationarity of the increments, every L\'evy process is a time-homogeneous Markov process, i.e.\ $P_t f(x) := \mbb{E}f(x+X_t)$ defines a Markov semigroup on $\mc{B}_b(\mbb{R}^d)$. We denote by $(A,\mc{D}(A))$ the \emph{(weak) infinitesimal generator}, \begin{align*}
	\mc{D}(A) &:= \left\{f \in \mc{B}_b(\mbb{R}^d); \exists g \in \mc{B}_b(\mbb{R}^d)\, \forall x \in \mbb{R}^d \::\: \lim_{t \to 0} \frac{\mbb{E}f(x+X_t)-f(x)}{t} = g(x) \right\}, \\
	Af(x) &:= \lim_{t \to 0} \frac{\mbb{E}f(x+X_t)-f(x)}{t}, \quad f \in \mc{D}(A).
\end{align*}
If $f \in C_b^2(\mbb{R}^d)$ then $f \in \mc{D}(A)$ and \begin{align}
	Af(x) = b \cdot \nabla f(x) + \frac{1}{2} \tr(Q \cdot \nabla^2 f(x)) + \int_{y \neq 0} \left( f(x+y)-f(x)-\nabla f(x) \cdot y \I_{(0,1)}(|y|) \right) \, \nu(dy). \label{gen}
\end{align}
Restricted to $C_c^{\infty}(\mbb{R}^d)$, the infinitesimal generator is a pseudo-differential operator with symbol $\psi$,  \begin{equation*}
	Af(x) = - \int_{\mbb{R}^d} \psi(\xi) e^{ix \cdot \xi} \hat{f}(\xi) \, d\xi, \qquad f \in C_c^{\infty}(\mbb{R}^d),
\end{equation*}
where $\hat{f}(\xi) = (2\pi)^{-d} \int_{\mbb{R}^d} f(x) e^{-ix \cdot \xi} \, dx$ is the Fourier transform of $f$. The pseudo-differential $A^*$ with symbol $\overline{\psi(\xi)}=\psi(-\xi)$ is the \emph{adjoint} of $A$, in the sense that, \begin{equation*}
	\forall f,g \in C_c^{\infty}(\mbb{R}^d) \::\: \int_{\mbb{R}^d} Af(x) g(x) \, dx = \int_{\mbb{R}^d} f(x) A^* g(x) \, dx.
\end{equation*}
Given an open set $U \subseteq \mbb{R}^d$ and $g \in L^{\infty}(U)$, a function $f$ is called a \emph{weak solution to $Af=g$ in $U$} if \begin{equation}
	\forall \varphi \in C_c^{\infty}(U) \::\: \int_{\mbb{R}^d} f(x) A^* \varphi(x) \, dx = \int_U g(x) \varphi(x) \, dx. \label{weak}
\end{equation}
It is implicitly assumed that the integral on the left-hand side exists; a sufficient condition is $f \in L^{\infty}(\mbb{R}^d)$, see e.g.\ \cite[Lemma 2.1]{fall16} and \cite[Proposition 2.1]{liouville} for milder growth conditions on $f$.

\section{Proofs} \label{p}

In this section we present the proofs of our main results. Corollary~\ref{main-3} and Corollary~\ref{main-5} are consequences of Theorem~\ref{main-1}, and therefore the main part is to establish Theorem~\ref{main-1}. The idea is to combine the global Schauder estimates from \cite{reg-levy} with a truncation technique to establish interior Schauder estimates. We start with the following auxiliary result.

\begin{lem} \label{p-1}
	Let $(X_t)_{t \geq 0}$ be a L\'evy process with generator $(A,\mc{D}(A))$ and L\'evy triplet $(b,Q,\nu)$. If $f,g \in C_b^2(\mbb{R}^d)$ then $f g \in \mc{D}(A)$ and \begin{equation*}
		A(f g) = g Af + f Ag + \Gamma(f,g)
	\end{equation*}
	where \begin{equation*}
		\Gamma(f,g)(x) := \nabla f(x) \cdot Q \nabla g(x) + \int_{y \neq 0} (f(x+y)-f(x))(g(x+y)-g(x)) \, \nu(dy), \qquad x \in \mbb{R}^d,
	\end{equation*}
	is the \textup{Carr\'e du Champ operator}. 
\end{lem}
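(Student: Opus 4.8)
The plan is to exploit the fact, recorded in the excerpt, that $C_b^2(\mbb{R}^d) \subseteq \mc{D}(A)$ together with the explicit pointwise representation \eqref{gen}. First I would check that $fg \in C_b^2(\mbb{R}^d)$: by the Leibniz rule $\nabla(fg) = f\nabla g + g\nabla f$ and $\nabla^2(fg) = g\nabla^2 f + f\nabla^2 g + \nabla f \otimes \nabla g + \nabla g \otimes \nabla f$ are bounded and continuous whenever $f,g$ are, so $fg \in \mc{D}(A)$ and $A(fg)$ is given by \eqref{gen}. The proof then reduces to inserting these product formulas into \eqref{gen} and sorting the resulting terms into the three groups $gAf$, $fAg$, and $\Gamma(f,g)$.

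For the drift term this is immediate, since $b \cdot \nabla(fg) = f(b\cdot\nabla g) + g(b\cdot\nabla f)$ produces no cross contribution. For the diffusion term I would use the identity $\tr(Q(u\otimes v)) = v \cdot Qu$ together with the symmetry of $Q$ to obtain $\tr\big(Q(\nabla f \otimes \nabla g + \nabla g \otimes \nabla f)\big) = 2\,\nabla f \cdot Q\nabla g$; hence $\tfrac12 \tr(Q\nabla^2(fg))$ splits into the diffusion parts of $\tfrac12 f\tr(Q\nabla^2 g)$ and $\tfrac12 g\tr(Q\nabla^2 f)$ plus exactly the local part $\nabla f \cdot Q\nabla g$ of $\Gamma(f,g)$.

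The jump term is where the actual work lies. Writing $\Delta_y f(x) = f(x+y)-f(x)$, I would combine the elementary identity
\begin{equation*}
 f(x+y)g(x+y)-f(x)g(x) = g(x)\,\Delta_y f(x) + f(x)\,\Delta_y g(x) + \Delta_y f(x)\,\Delta_y g(x)
\end{equation*}
with the splitting $\nabla(fg)(x)\cdot y = f(x)\,\nabla g(x)\cdot y + g(x)\,\nabla f(x)\cdot y$ to rewrite the integrand of the jump part of $A(fg)$ as
\begin{equation*}
 g(x)\big(\Delta_y f(x) - \nabla f(x)\cdot y\,\I_{(0,1)}(|y|)\big) + f(x)\big(\Delta_y g(x) - \nabla g(x)\cdot y\,\I_{(0,1)}(|y|)\big) + \Delta_y f(x)\,\Delta_y g(x).
\end{equation*}
Integrating against $\nu$ then yields precisely $g(x)$ times the jump part of $Af(x)$, plus $f(x)$ times the jump part of $Ag(x)$, plus the nonlocal part of $\Gamma(f,g)(x)$.

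The main obstacle, and the only genuinely non-algebraic step, is to justify that this integral may be split into its three pieces, i.e.\ that each piece is separately $\nu$-integrable. Here I would invoke Taylor's theorem: since $f,g \in C_b^2$, the compensated increments $\Delta_y f(x) - \nabla f(x)\cdot y\,\I_{(0,1)}(|y|)$ and its analogue for $g$ are $O(|y|^2)$ as $|y| \to 0$ and bounded for $|y| \geq 1$, while the cross term obeys $|\Delta_y f(x)\,\Delta_y g(x)| \leq \|\nabla f\|_{\infty}\|\nabla g\|_{\infty}\,|y|^2$ for $|y| \leq 1$ and is bounded for $|y| \geq 1$. As $\int_{y \neq 0} \min\{1,|y|^2\}\,\nu(dy) < \infty$, all three integrands are absolutely $\nu$-integrable, uniformly in $x$, so the splitting is legitimate by linearity of the integral; the same bounds show that $\Gamma(f,g)$ is well defined and bounded. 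Collecting the drift, diffusion, and jump contributions then gives $A(fg) = gAf + fAg + \Gamma(f,g)$.
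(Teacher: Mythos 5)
Your proposal is correct and follows essentially the same route as the paper: the paper's proof simply notes $fg \in C_b^2(\mbb{R}^d) \subseteq \mc{D}(A)$, applies the representation \eqref{gen} to $fg$, and rearranges terms. Your write-up fills in the details the paper leaves implicit (the Leibniz computations and the $\nu$-integrability of each piece justifying the splitting of the jump integral), all of which are carried out correctly.
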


\begin{proof}
	Clearly, $f \cdot g \in C_b^2(\mbb{R}^d) \subseteq \mc{D}(A)$. The identity for $A(f \cdot g)$ follows by applying \eqref{gen} for $f \cdot g$ and rearranging the terms.
\end{proof}

Let us mention that the regularity assumptions in Lemma~\ref{p-1} can be relaxed. Roughly speaking, the identity holds whenever $f,g \in \mc{D}(A)$ are sufficiently smooth to make sense of $\Gamma(f,g)$; e.g.\ if $Q=0$ then $f,g$ need to satisfy a certain H\"{o}lder condition, see \cite[Theorem 4.3]{reg-levy} and \cite{ihke}. \par \medskip

The following a priori estimate is the core of the proof of our first main result, Theorem~\ref{main-1}. 

\begin{prop} \label{p-3}
	Let $(X_t)_{t \geq 0}$ be a L\'evy process with generator $(A,\mc{D}(A))$, characteristic exponent $\psi$ and L\'evy triplet $(b,Q,\nu)$. If \eqref{A1}-\eqref{A3} hold, then there exists for every $R>0$ and $\delta>0$ some constant $c>0$ such that \begin{equation*}
		\|f\|_{\mc{C}_b^{\alpha}(B(x,R))} \leq c \left(\|f\|_{\infty} + \|Af\|_{\infty,B(x,R+\delta)}\right)
	\end{equation*}
	for all $f \in C_b^2(\mbb{R}^d)$ and $x \in \mbb{R}^d$; here $\alpha \in (0,2]$ denotes the constant from \eqref{A2}.
\end{prop}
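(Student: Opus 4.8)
The plan is to prove a \emph{localized} a priori estimate from the \emph{global} one \eqref{main-eq15} by means of a cutoff function, using the Carr\'e du Champ identity from Lemma~\ref{p-1} to control the commutator error. Fix $x\in\mbb{R}^d$, and choose $\chi\in C_c^\infty(\mbb{R}^d)$ with $0\le\chi\le 1$, $\chi\equiv 1$ on $B(x,R)$ and $\spt\chi\subseteq B(x,R+\delta)$, with derivative bounds depending only on $\delta$ (not on $f$ or $x$, by translation). Apply the global estimate \eqref{main-eq15} to the product $\chi f\in C_b^2(\mbb{R}^d)$:
\begin{equation*}
	\|\chi f\|_{\mc{C}_b^{\alpha}(\mbb{R}^d)} \leq c\left(\|\chi f\|_{\infty}+\|A(\chi f)\|_{\infty}\right).
\end{equation*}
Since $\chi\equiv 1$ on $B(x,R)$ we have $\|f\|_{\mc{C}_b^{\alpha}(B(x,R))}\le\|\chi f\|_{\mc{C}_b^{\alpha}(\mbb{R}^d)}$ (up to constants, using the monotonicity of the difference-quotient seminorms on the smaller set — one has to check the supremum over admissible increments $|h|<\min\{1,r_z/k\}$ on $B(x,R)$ is dominated by the $\mbb{R}^d$ version, which is immediate). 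The term $\|\chi f\|_\infty\le\|f\|_\infty$ is harmless. Everything therefore reduces to estimating $\|A(\chi f)\|_\infty$.

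By Lemma~\ref{p-1}, $A(\chi f)=\chi Af+f A\chi+\Gamma(f,\chi)$. The first term is bounded by $\|\chi\|_\infty\|Af\|_{\infty,B(x,R+\delta)}$ because $\chi$ is supported in $B(x,R+\delta)$. The second term $\|f A\chi\|_\infty\le\|f\|_\infty\|A\chi\|_\infty$, and $\|A\chi\|_\infty$ is a finite constant depending only on $\delta$ since $\chi\in C_c^\infty$ and $A$ maps $C_b^2$ into bounded functions; by translation invariance of $A$ this constant does not depend on $x$. The main work is the Carr\'e du Champ term
\begin{equation*}
	\Gamma(f,\chi)(x')=\nabla f(x')\cdot Q\nabla\chi(x')+\int_{y\neq 0}(f(x'+y)-f(x'))(\chi(x'+y)-\chi(x'))\,\nu(dy).
\end{equation*}
The diffusion part is the delicate one: it contains $\nabla f$, so a crude bound by $\|\nabla f\|_\infty$ would not close the estimate (it would require controlling a first derivative by $\|f\|_\infty+\|Af\|_\infty$, which is exactly what we are proving). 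This is where \eqref{A3} enters: either $Q=0$, in which case this term vanishes identically, or $\alpha>1$, in which case $\nabla f$ can be controlled via the interpolation inequality Lemma~\ref{def-11}\eqref{def-11-ii} — bounding $|\nabla f|$ on $\spt\nabla\chi$ by $r^{-\alpha}\|f\|_{\infty}$ plus a small multiple of the $\mc{C}_b^\alpha$-seminorm, so the seminorm contribution can be absorbed into the left-hand side.

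For the jump part of $\Gamma(f,\chi)$ I would split the integral at $|y|=1$. For $|y|\ge 1$ use $\int_{|y|\ge1}\nu(dy)<\infty$ together with $\|f\|_\infty$ and $\|\chi\|_\infty$ to get a bound by a constant times $\|f\|_\infty$. For $|y|\le 1$ exploit that both increments are small: $|\chi(x'+y)-\chi(x')|\le\|\nabla\chi\|_\infty|y|$, and estimate $|f(x'+y)-f(x')|$ using the $\mc{C}_b^\alpha$-regularity of $f$ (H\"older/Zygmund increments), which pairs with $|y|^{\gamma}$ against the finiteness $\int_{|y|\le1}|y|^{\gamma}\,\nu(dy)<\infty$ guaranteed by \eqref{A3}. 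The balance condition $\alpha+1>\gamma$ is precisely what makes the exponent $\min\{\alpha,1\}+1>\gamma$ of $|y|$ integrable against $\nu$ near the origin while leaving a factor that can again be split into a controllable $\|f\|_\infty$-part and an absorbable small multiple of the seminorm. Thus the whole estimate takes the form
\begin{equation*}
	\|f\|_{\mc{C}_b^{\alpha}(B(x,R))}\le c\left(\|f\|_\infty+\|Af\|_{\infty,B(x,R+\delta)}\right)+\tfrac{1}{2}\|f\|_{\mc{C}_b^{\alpha}(B(x,R))},
\end{equation*}
after which the seminorm term is absorbed, yielding the claim with a constant $c=c(R,\delta)$ independent of $f$ and $x$. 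The main obstacle is managing the interplay between the cutoff's derivatives and the gradient/H\"older seminorm of $f$ so that the absorption argument genuinely closes — i.e.\ making the coefficient of the seminorm on the right strictly less than $1$ — which is exactly where \eqref{A3} is used in both its guises ($Q=0$ or $\alpha>1$, and $\alpha+1>\gamma$).
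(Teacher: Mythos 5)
Your setup---applying the global estimate \eqref{main-eq15} to $\chi f$, decomposing $A(\chi f)$ via Lemma~\ref{p-1}, and invoking \eqref{A3} to handle $\Gamma(f,\chi)$---is exactly the paper's Step~1. But the closing absorption argument has a genuine gap, in fact two. First, a \emph{region mismatch}: the error terms produced by $\Gamma(f,\chi)$ live where $\chi$ varies, i.e.\ the diffusion term contains $|\nabla f|$ on $\spt \nabla\chi \subseteq B(x,R+\delta)\setminus B(x,R)$, and the small-jump term requires H\"older increments of $f$ at points $x'$ in a neighbourhood of that annulus where $\chi(x'+y)\neq\chi(x')$. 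None of this is controlled by $\|f\|_{\mc{C}_b^{\alpha}(B(x,R))}$: that norm only sees $f$ on the inner ball, and $f$ may be rough on the annulus while smooth inside. So what your estimates actually give is
\begin{equation*}
	\|f\|_{\mc{C}_b^{\alpha}(B(x,R))} \leq c\left(\|f\|_{\infty}+\|Af\|_{\infty,B(x,R+\delta)}\right) + C\cdot\bigl[\text{seminorm of $f$ on a strictly larger region}\bigr],
\end{equation*}
and this cannot be absorbed into the left-hand side; trying to absorb into norms on larger balls instead is circular unless one sets up a genuine iteration over nested radii.

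Second, \emph{no smallness mechanism}: even if the regions matched, the coefficient of the seminorm term is not less than $1$. Lemma~\ref{def-11}\eqref{def-11-ii} gives a fixed constant $c$, and the small-jump bound carries the fixed factor $\|\nabla\chi\|_{\infty}\int_{|y|\leq 1}|y|^{1+\min\{\alpha,1\}}\,\nu(dy)$; condition \eqref{A3} guarantees finiteness of these quantities, not smallness. Manufacturing the smallness is the real work in the paper's proof, and it is done by three devices your sketch lacks: (i) a whole family of distance-scaled cutoffs $\chi^{(x)}$, $x \in B(0,2)$, together with the weighted seminorms $|f|_{B(0,2),\sigma,\varrho}$ of \eqref{p-eq16}, so that the error at each point is dominated by the \emph{same} global weighted quantity; (ii) estimating the error terms in a strictly weaker norm, at exponent $\alpha-\eps$ (Lemma~\ref{def-11} applied at exponent $\alpha-2\eps$, then \eqref{p-eq6},\eqref{p-eq7} to trade a restriction to small increments for the exponent gain); and (iii) the interpolation inequality \eqref{p-eq22}, $|f|_{B(0,2),\alpha-\eps,\varrho}\leq\delta|f|_{B(0,2),\alpha,\varrho}+c_1\|f\|_{\infty}$ with $\delta$ arbitrarily small, which is what finally legitimizes absorption (together with the a priori finiteness of the absorbed seminorm, valid since $f\in C_b^2(\mbb{R}^d)$ and $\alpha\leq 2$). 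Without the exponent-lowering step and the weighted-seminorm bookkeeping, the absorption you assert does not close.
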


In the proof of Proposition~\ref{p-3} we will use the elementary inequalities \begin{align}
	\sup_{|h| \leq R} \frac{|\Delta_h^3 f(x)|}{|h|^{\alpha}} 
	&\leq 8 r^{-\alpha} \|f\|_{\infty} + \sup_{|h| \leq r} \frac{|\Delta_h^3 f(x)|}{|h|^{\alpha}} \label{p-eq6} \\
	&\leq 8 r^{-\alpha} \|f\|_{\infty} + r^{\eps} \sup_{|h| \leq r} \frac{|\Delta_h^3 f(x)|}{|h|^{\alpha+\eps}}, \label{p-eq7}
\end{align}
which hold for any $0<r<R \leq 1$, $\eps >0$, $f \in \mc{B}_b(\mbb{R}^d)$ and $x \in \mbb{R}^d$.

\begin{proof}[Proof of Proposition~\ref{p-3}]
	For simplicity of notation we consider $x_0 =0$ and $R=\delta=1$, i.e.\ we need to show \begin{equation*}
		\|f\|_{\mc{C}_b^{\alpha}(B(0,1))} \leq c\left(\|f\|_{\infty} + \|Af\|_{\infty,B(0,2)}\right).
	\end{equation*}
	Let $\chi \in C_c^{\infty}(\mbb{R}^d)$ be such that $\I_{B(0,1/4)} \leq \chi \leq \I_{B(0,1/2)}$. For $x \in B(0,2)$ set $r_x := d(x,B(0,2)^c)$ and $\chi^{(x)}(y) := \chi(r_x^{-1}(y-x))$. As $r_x \leq 2$, it follows from the chain rule that \begin{equation*}
		\|\chi^{(x)}\|_{C_b^2(\mbb{R}^d)} \leq (1+r^{-2}) \|\chi\|_{C_b^2(\mbb{R}^d)} \leq 5r^{-2} \|\chi\|_{C_b^2(\mbb{R}^d)}.
	\end{equation*}
	We split the proofs in several steps. \par
	\textbf{Step 1:} There exist constants $\eps \in (0,\alpha)$, $\varrho>0$ and $C_1>0$ (not depending on $f$) such that  \begin{equation}
		\|f \chi^{(x)}\|_{\mc{C}_b^{\alpha}(\mbb{R}^d)} \leq C_1 \left(r_x^{-\alpha-\varrho} \|f\|_{\infty} + \|Af\|_{\infty,B(0,2)} +  \sup_{z \in B(x,7r_x /8)} K(x,z) \right) \label{p-eq8}
	\end{equation}
	for all $x \in B(0,2)$, where \begin{equation}
		K(x,z) := \sup_{|h| \leq r_x/32} \frac{|\Delta_h^3 f(z)|}{|h|^{\alpha-\eps}} \label{p-eq9}
	\end{equation}
	\emph{Indeed:} We fix $x \in (0,2)$ and write $r := r_x$ for brevity. Since $f \chi^{(x)}$ is twice continuously differentiable and vanishing at infinity, it is contained in the domain of the strong infinitesimal generator, and so it follows from \cite[Theorem 1.1]{reg-levy} that \begin{equation*}
		\|f \chi^{(x)} \|_{\mc{C}_b^{\alpha}(\mbb{R}^d)} \leq c_1 \left(\|f\|_{\infty}  + \|A(f \chi^{(x)})\|_{\infty}\right)
	\end{equation*}
	for some constant $c_1>0$ which does not depend on $f$ and $x$. Hence, by Lemma~\ref{p-1}, \begin{align*}
		\|f \chi^{(x)}\|_{\mc{C}_b^{\alpha}(\mbb{R}^d)} 
		\leq c_1 \left( \|f\|_{\infty} + \|\chi^{(x)} Af\|_{\infty} + \|f A \chi^{(x)} \|_{\infty}  + \|\Gamma(f,\chi^{(x)})\|_{\infty} \right).
	\end{align*}
	As $0 \leq \chi^{(x)} \leq 1$, $\spt \chi^{(x)} \subseteq B(0,2)$ and \begin{equation*}
		\|A \chi^{(x)}\|_{\infty}
		\leq c_2 \|\chi^{(x)}\|_{C_b^2(\mbb{R}^d)} 
		\leq 5c_2 r^{-2} \|\chi\|_{C_b^2(\mbb{R}^d)},
	\end{equation*}
	this gives \begin{equation}
		\|f \chi^{(x)}\|_{\mc{C}_b^{\alpha}(\mbb{R}^d)}
		\leq (1+5c_2) \left( r^{-2} \|f\|_{\infty} + \|Af\|_{\infty,B(0,2)} + \|\Gamma(f,\chi^{(x)})\|_{\infty} \right). \label{p-eq10}
	\end{equation}
	It remains to estimate the supremum norm of the term involving the Carr\'e du champ operator. For $z \in \mbb{R}^d \backslash B(x,3r/4)$ we have $\chi^{(x)}=0$ on $B(z,r/4)$, and therefore \begin{equation*}
		\Gamma(f,\chi^{(x)})(z) = \int_{|y|>r/4} \left( f(z+y)-f(z) \right) \left( \chi^{(x)}(z+y)-\chi^{(x)}(z) \right) \, \nu(dy)
	\end{equation*}
	implying \begin{align}
		|\Gamma(f,\chi^{(x)})(z)| 
		\leq 4 \|f\|_{\infty} \int_{|y|>r/4} \, \nu(dy)
		\leq 4^{1+\gamma} r^{-\gamma}  \|f\|_{\infty} \int_{y \neq 0} \min\{1,|y|^{\gamma}\} \, \nu(dy) \label{p-eq12}.
	\end{align}
	Since $\int_{y \neq 0} \min\{1,|y|^{\gamma}\} \, \nu(dy)<\infty$, cf.\ \eqref{A3}, and $r \leq 2$, this implies \begin{equation*}
		|\Gamma(f,\chi^{(x)})(z)|
		\leq c_3 r^{-\alpha-\varrho} \|f\|_{\infty}, \qquad z \in \mbb{R}^d \backslash B(x,3r/4),
	\end{equation*}
	for suitable constants $c_3>0$ and $\varrho>0$, which is an estimate of the desired form. For $z \in B(x,3r/4)$ we consider the local term and the non-local term separately, i.e.\ we write $\Gamma(f,\chi^{(x)})(z) = D(z) + I(z)$ where \begin{equation*}
		D(z) := \nabla f(z) \cdot Q \nabla \chi^{(x)}(z) \quad \text{and} \quad I(z) := \int_{y \neq 0} (f(z+y)-f(z))(\chi^{(x)}(z+y)-\chi^{(x)}(z)) \, \nu(dy).
	\end{equation*}
	For the local term it clearly suffices to consider the case $Q \neq 0$. If $Q \neq 0$ then, by \eqref{A3}, $\alpha>1$ and therefore we can choose $\eps \in (0,1)$ such that $\alpha-2\eps>1$. Clearly, \begin{align*}
		|D(z)| 
		\leq \|\chi^{(x)}\|_{C_b^1(\mbb{R}^d)} \|Q\| |\nabla f(z)|
		\leq 3r^{-1} \|Q\| \|\chi\|_{C_b^1(\mbb{R}^d)} |\nabla f(z)|.
	\end{align*}
	Since there exists a constant $c_4>0$ such that \begin{equation*}
		|\nabla f(z)|
		\leq c_4 r^{-\alpha} \|f\|_{\infty} + c_4 \sup_{|y-z| \leq r/16} \sup_{|h| \leq 1} \frac{|\Delta_h^3 f(y)|}{|h|^{\alpha-2\eps}},
	\end{equation*}
	cf. Lemma~\ref{def-11}\eqref{def-11-ii}, it follows from \eqref{p-eq6} and \eqref{p-eq7} that \begin{align*}
		|\nabla f(z)|
		&\leq c_4' r^{-2\theta-\alpha} \|f\|_{\infty} + c_4' \sup \left\{ \frac{|\Delta_h^3 f(y)|}{|h|^{\alpha-2\eps}}; \, y \in B(x,7r/8),\, |h| \leq  \frac{1}{32} \left(\frac{r}{2}\right)^{\theta} \right\} \\
		&\leq c_4' r^{-2\theta-\alpha} \|f\|_{\infty} + c_4'' r^{\theta \eps} \sup \left\{ \frac{|\Delta_h^3 f(y)|}{|h|^{\alpha-\eps}}; \, y \in B(x,7r/8),\, |h| \leq  \frac{1}{32} \left(\frac{r}{2}\right)^{\theta} \right\}
	\end{align*}
	for any $\theta>0$ and $z \in B(x,3r/4)$. Choosing $\theta := 1/\eps$ we get \begin{equation*}
		|D(z)| \leq c_5 r^{-1-2/\eps-\alpha} \|f\|_{\infty} + c_5 \sup \left\{ \frac{|\Delta_h^3 f(y)|}{|h|^{\alpha-\eps}}; \, y \in B(x,7r/8),\, |h| \leq  \frac{1}{32} \left(\frac{r}{2}\right)^{\theta} \right\}
	\end{equation*} 
	for all $z \in B(x,3r/4)$. As $r/2 \leq 1$ and $\theta>1$, the supremum over $|h| \leq \frac{1}{32} (r/2)^{\theta}$ is less or equal than the supremum over $|h| \leq \frac{1}{32} (r/2)$, and so \begin{equation*}
		|D(z)| \leq  c_5 r^{-1-2/\eps-\alpha} \|f\|_{\infty} + c_5 \sup_{y \in B(x,7r/8)} K(x,y)
	\end{equation*}
	with $K$ defined in \eqref{p-eq9}. 	It remains to estimate the non-local term.  We consider the cases $\gamma=2$ and $\gamma \in (0,2)$ separately. If $\gamma=2$, then by \eqref{A3} $\alpha>1$. Choose $\eps \in (0,1)$ such that $\alpha-2\eps>1$ and set $\theta=1/\eps$. By the Lipschitz continuity of $\chi^{(x)}$, \begin{align*}
		|I(z)|
		&\leq \|\chi^{(x)}\|_{C_b^1(\mbb{R}^d)} \int_{|y| \leq r/32} |f(y+z)-f(z)| \, |y| \, \nu(dy) + 4 \|f\|_{\infty} \int_{|y|>r/32} \, \nu(dy).
	\end{align*}
	Since $\int_{y \neq 0} \min\{1,|y|^{\gamma}\} \, \nu(dy) < \infty$ and $\|\chi^{(x)}\|_{C_b^1(\mbb{R}^d)} \leq 3r^{-1} \|\chi\|_{C_b^1(\mbb{R}^d)}$, this implies that there exists a finite constant $c_6>0$ such that 
	\begin{align*}
		|I(z)| 
		& \leq c_6 r^{-1} \|\chi\|_{C_b^1(\mbb{R}^d)} \sup_{|h| \leq r/32} |\nabla f(z+h)| \int_{|y| \leq r/32} |y|^2 \, \nu(dy)  +c_6 r^{-\gamma}  \|f\|_{\infty}. 
	\end{align*}
	 For the first term on the right-hand side we can now use a reasoning similar to that for the local term and get the required estimate. Finally we consider the case $\gamma<2$.  Fix $\eps \in (0,1)$ such that $\max\{0,\gamma-1\} < \alpha-2 \eps$ -- it exists because of \eqref{A3} -- and set $\theta := 1/\eps$. We have \begin{align*}
		|I(z)|
		&\leq \|\chi^{(x)}\|_{C_b^1(\mbb{R}^d)} \sup_{|h| \leq 1} \frac{|f(z+h)-f(z)|}{|h|^{\max\{0,\gamma-1\}}} \int_{|y| \leq 1} |y|^{\gamma} \, \nu(dy) + 4 \|f\|_{\infty} \int_{|y|>1} \, \nu(dy).
	\end{align*}
	Using that $\int \min\{1,|y|^{\gamma}\} \, \nu(dy)< \infty$ and $\|\chi^{(x)}\|_{C_b^1(\mbb{R}^d)} \leq 3r^{-1} \|\chi\|_{C_b^1(\mbb{R}^d)}$, we find a finite constant $c_7>0$ such that \begin{equation*}
		|I(z)|
		\leq c_7 \|f\|_{\infty} + c_7 r^{-1} \sup_{|h| \leq 1} \frac{|f(z+h)-f(z)|}{|h|^{\max\{0,\gamma-1\}}}.
	\end{equation*}
	Since $\max\{0,\gamma-1\}<1$, an application of Lemma~\ref{def-11}\eqref{def-11-i} shows that \begin{align*}
		\sup_{|h| \leq 1} \frac{|f(z+h)-f(z)|}{|h|^{\max\{0,\gamma-1\}}}
		&\leq c_8 r^{- 1 } \|f\|_{\infty} + c_8 \sup_{|h| \leq 1} \sup_{|y-z| \leq r/16} \frac{|\Delta_h^3 f(y)|}{|h|^{\max\{0,\gamma-1\}}}.
	\end{align*}
	Hence, by \eqref{p-eq6} and \eqref{p-eq7},
	\begin{align*}
		\sup_{|y| \leq 1} \frac{|f(z+y)-f(z)|}{|y|^{\max\{0,\gamma-1\}}}
		&\leq c_8' r^{-\theta} \|f\|_{\infty} + c_8 r^{\theta \eps} \sup \left\{ \frac{|\Delta_h^3 f(y)|}{|h|^{\alpha-\eps}}; \, y \in B(x,7r/8),\, |h| \leq  \frac{1}{32} \left(\frac{r}{2}\right)^{\theta} \right\}
	\end{align*}
	 for all $z \in B(x,3r/4)$. Recalling that $\theta \eps=1$ and $r \leq 2$ we conclude that \begin{align*}
		|I(z)|
		&\leq c_9 r^{- \theta-1} \|f\|_{\infty} + c_9 \sup \left\{ \frac{|\Delta_h^3 f(y)|}{|h|^{\alpha-\eps}}; \, y \in B(x,7r/8),\, |h| \leq  \frac{1}{32} \left(\frac{r}{2}\right)^{\theta} \right\} \\
		&\leq  c_9 r^{- \theta-1} \|f\|_{\infty} + c_9  \sup_{y \in B(x,7r/8)} K(x,y).
	\end{align*}
	\textbf{Step 2:} There exists a constant $C_2>0$ such that  \begin{equation}
		|f|_{B(0,2),\alpha,\varrho} \leq C_2 \left( \|f\|_{\infty} + \|Af\|_{\infty,B(0,2)} + |f|_{B(0,2),\alpha-\eps,\varrho} \right), \label{p-eq15}
	\end{equation}
	where $\varrho$, $\eps$ are the constants from Step 1 and \begin{equation}
		|f|_{B(0,2),\sigma,\kappa} := \sup_{x \in B(0,2)} \sup_{|h| \leq r_x/4} r_x^{\kappa+\sigma} \frac{|\Delta_h^3 f(x)|}{|h|^{\sigma}}, \qquad \kappa>0, \sigma \in (0,3). \label{p-eq16}
	\end{equation}
	\emph{Indeed:} For $z \in B(x,7r_x/8)$ and $x \in B(0,2)$ we have $r_z \geq r_x/8$, and therefore the mapping $K$ defined in \eqref{p-eq9} satisfies \begin{align*}
		r_x^{\alpha+\varrho} \sup_{z \in B(x,7r_x/8)} K(x,z)
		&\leq 8^{\alpha+\varrho} \sup_{z \in B(x,7r_x/8)} \sup_{|h| \leq r_x/32} r_z^{\alpha+\varrho}   \frac{|\Delta_h^3 f(z)|}{|h|^{\alpha-\eps}} \\
		&\leq 8^{\alpha+\varrho} \sup_{z \in B(0,2)} \sup_{|h| \leq r_z/4} r_z^{\alpha+\varrho} \frac{|\Delta_h^3 f(z)|}{|h|^{\alpha-\eps}} \\
		&\leq 2 \, 8^{\alpha+\varrho} |f|_{B(0,2),\alpha-\eps,\varrho}.
	\end{align*}
	Consequently, it follows from Step 1 that \begin{equation*}
		r_x^{\varrho+\alpha} \|f \chi^{(x)} \|_{\mc{C}_b^{\alpha}(\mbb{R}^d)} 
		\leq C_1 \left( \|f\|_{\infty} + \|Af\|_{\infty,B(0,2)} + 2 \, 8^{\alpha+\varrho} |f|_{B(0,2),\alpha-\eps,\varrho} \right) 
	\end{equation*}
	for all $x \in B(0,2)$. As $\alpha \in (0,2]$, this gives \begin{align}
		r_x^{\varrho+\alpha} \sup_{|h| \leq 1} \sup_{z \in \mbb{R}^d} \frac{|\Delta_h^3 (f \chi^{(x)})(z)|}{|h|^{\alpha}}
		\leq c_2 \left( \|f\|_{\infty} + \|Af\|_{\infty,B(0,2)} + |f|_{B(0,2),\alpha-\eps,\varrho} \right)  \label{p-eq20}
	\end{align}
	for all $x \in B(0,2)$ and some constant $c_2>0$ not depending on $x$ and $f$, cf.\ \eqref{def-eq5}. Since $\chi^{(x)}=1$ on $B(x,r_x/4)$, this implies \begin{equation*}
		r_x^{\varrho+\alpha} \sup_{|h| \leq r_x/12} \frac{|\Delta_h^3 f(x)|}{|h|^\alpha} \leq c_2 \left( \|f\|_{\infty} + \|Af\|_{\infty,B(0,2)} + |f|_{B(0,2),\alpha-\eps,\varrho} \right).
	\end{equation*}
	On the other hand, we have \begin{equation*}
		r_x^{\varrho+\alpha} \sup_{r_x/12 \leq |h| \leq r_x/4} \frac{|\Delta_h^3 f(x)|}{|h|^\alpha}
		\leq 8 r_x^{\varrho+\alpha} \|f\|_{\infty} \left( \frac{r_x}{12} \right)^{-\alpha} 
		\leq c_3 \|f\|_{\infty}
	\end{equation*}
	for some uniform constant $c_3>0$. Combining both estimates yields \begin{equation*}
		r_x^{\varrho+\alpha} \sup_{|h| \leq r_x/4} \frac{|\Delta_h^3 f(x)|}{|h|^\alpha} \leq c_4 \left( \|f\|_{\infty} + \|Af\|_{\infty,B(0,2)} + |f|_{B(0,2),\alpha-\eps,\varrho} \right)
	\end{equation*}
	for all $x \in B(0,2)$, and this proves \eqref{p-eq15}. \par
	\textbf{Conclusion of the proof}: Choose $\delta \in (0,\frac{1}{4})$  sufficiently small such that $C_2 \delta \leq \frac{1}{2}$  for the constant $C_2$ from Step 2. By definition, cf.\ \eqref{p-eq16}, \begin{equation*}
		|f|_{B(0,2),\alpha-\eps,\varrho}
		= \sup_{x \in B(0,2)} \sup_{h: |h/r_x| \leq 1/4} r_x^{\varrho} \left( \frac{r_x}{|h|} \right)^{\alpha-\eps} |\Delta_h^3 f(x)|.
	\end{equation*}
	Since \begin{equation*}
		\sup_{h: |h/r_x| \leq \delta^{1/\eps}} r_x^{\varrho} \left( \frac{r_x}{|h|} \right)^{\alpha-\eps} |\Delta_h^3 f(x)|
		\leq \delta \sup_{h: |h/r_x| \leq \delta^{1/\eps}} r_x^{\varrho} \left( \frac{r_x}{|h|} \right)^{\alpha} |\Delta_h^3 f(x)|
		\leq \delta |f|_{B(0,2),\alpha,\varrho}
	\end{equation*}
	and \begin{equation*}
		\sup_{h: \delta^{1/\eps}< |h/r_x| \leq 1/4} r_x^{\varrho} \left( \frac{r_x}{|h|} \right)^{\alpha-\eps} |\Delta_h^3 f(x)|
		\leq \delta^{-\alpha/\eps} \sup_{h: \delta^{1/\eps}< |h/r_x| \leq 1/4} r_x^{\varrho} |\Delta_h^3 f(x)| 
		\leq 8 \delta^{-\alpha/\eps} 2^{\varrho} \|f\|_{\infty},
	\end{equation*}
	it follows that there exists a constant $c_1>0$ such that \begin{equation}
		|f|_{B(0,2),\alpha-\eps,\varrho} \leq \delta |f|_{B(0,2),\alpha,\varrho} + c_1 \|f\|_{\infty}. \label{p-eq22}
	\end{equation}
	As $C_2 \delta \leq \frac{1}{2}$, we find from Step 2 and \eqref{p-eq22} that \begin{align*}
		|f|_{B(0,2),\alpha,\varrho}
		&\leq C_2 \left(\|f\|_{\infty} + \|Af\|_{\infty,B(0,2)} + |f|_{B(0,2),\alpha-\eps,\varrho}\right) \\
		&\leq C_2 \left((1+c_1) \|f\|_{\infty} + \|Af\|_{\infty,B(0,2)} \right) + \frac{1}{2} |f|_{B(0,2),\alpha,\varrho},
	\end{align*}
	i.e. \begin{equation*}
		|f|_{B(0,2),\alpha,\varrho}
		\leq c_2 \left(\|f\|_{\infty} + \|Af\|_{\infty,B(0,2)}\right).
	\end{equation*}
	On the other hand, we also have \begin{align*}
		|f|_{B(0,2),\alpha,\varrho} \geq \sup_{x \in B(0,1)} \sup_{|h| \leq 1/4} \frac{|\Delta_h^3 f(x)|}{|h|^{\alpha}}.
	\end{align*}
	Combining both estimates and applying Lemma~\ref{def-11} proves the assertion.
\end{proof}

The seminorms $|f|_{U,\alpha,\varrho}$ which we introduced in \eqref{p-eq15} are closely related to seminorms which appear in the study of Schauder estimates for second order differential operators, cf.\ \cite{gilbarg}; our definition is inspired by H\"{o}lder--Zygmund norms whereas the seminorms in \cite{gilbarg} are based on   ``classical'' H\"{o}lder norms. \par \medskip

In order to apply the a priori estimate from Proposition~\ref{p-3}, we have to approximate the weak solution $f$ by a sequence $(f_k)_{k \geq 1}$ of twice differentiable functions; it is a natural idea to consider $f_k := f \ast \chi_k$ for a suitable sequence of mollifiers $(\chi_k)_{k \geq 1}$. To make this approximation work, we need to know that $Af_k$ is on $U$ close to $Af=g$, and this is what the next lemma is about.

\begin{lem} \label{p-2}
	Let $(X_t)_{t \geq 0}$ be a L\'evy process with infinitesimal generator $(A,\mc{D}(A))$. Let $f \in L^{\infty}(\mbb{R}^d)$ be a weak solution to the equation $Af = g$ in $U$ for  $g \in L^{\infty}(U)$ and $U \subseteq \mbb{R}^d$ open. If $\chi \in C_c^{\infty}(B(0,r))$ for some r$>0$, then $f \ast \chi \in \mc{D}(A)$ and $A(f \ast \chi)(x) = (g \ast \chi)(x)$ for all $x \in U$ with $B(x,r) \subseteq U$.
\end{lem}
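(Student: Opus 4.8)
The plan is to exploit the translation invariance of $A$ together with the weak formulation \eqref{weak}, tested against a function built from $\chi$. Since $f \in L^{\infty}(\mbb{R}^d)$ and $\chi \in C_c^{\infty}(B(0,r))$, standard properties of mollification show that $f \ast \chi \in C_b^{\infty}(\mbb{R}^d)$; indeed $\partial^{\beta}(f \ast \chi) = f \ast \partial^{\beta}\chi$ is bounded for every multi-index $\beta$ because $\|f \ast \partial^{\beta}\chi\|_{\infty} \le \|f\|_{\infty}\|\partial^{\beta}\chi\|_{L^1}$. In particular $f \ast \chi \in C_b^2(\mbb{R}^d) \subseteq \mc{D}(A)$, which settles the first assertion, and $A(f \ast \chi)$ is then given by the explicit formula \eqref{gen}.

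The core of the argument is the pair of identities
\begin{equation*}
	A(f \ast \chi)(x) = (f \ast A\chi)(x) \quad \text{for all $x$,} \qquad (f \ast A\chi)(x) = (g \ast \chi)(x) \text{ whenever } B(x,r) \subseteq U.
\end{equation*}
For the first identity I would insert $(f \ast \chi)(w) = \int f(z)\chi(w-z)\,dz$ into \eqref{gen} and interchange the $\nu$-integral with the $z$-integral of the convolution. The drift and diffusion terms commute with convolution trivially, as $\nabla(f \ast \chi) = f \ast \nabla\chi$ and $\nabla^2(f \ast \chi) = f \ast \nabla^2\chi$. For the jump part the interchange is justified by Tonelli, after bounding
\begin{equation*}
	\int_{\mbb{R}^d} \left| \chi(w+y)-\chi(w)-\nabla\chi(w)\cdot y\,\I_{(0,1)}(|y|) \right| \, dw \le C \min\{1,|y|^2\}
\end{equation*}
with $C$ depending only on $\chi$ (Taylor's theorem for $|y|<1$, boundedness for $|y|\ge 1$) and using $\int_{y \neq 0} \min\{1,|y|^2\}\,\nu(dy) < \infty$. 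This simultaneously shows $A\chi \in L^1(\mbb{R}^d)$, so that $f \ast A\chi$ is well defined, and yields $A(f \ast \chi)(x) = \int f(z)(A\chi)(x-z)\,dz$ for every $x \in \mbb{R}^d$.

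For the second identity, fix $x$ with $B(x,r)\subseteq U$ and set $\varphi(z) := \chi(x-z)$. Since $\spt\chi \subseteq B(0,r)$, the function $\varphi$ is supported in $B(x,r) \subseteq U$, hence $\varphi \in C_c^{\infty}(U)$ is an admissible test function. A direct computation with the Lévy--Khintchine representation of $A^*$ (which has L\'evy triplet $(-b,Q,\nu(-\cdot))$)—using $\nabla\varphi(z) = -(\nabla\chi)(x-z)$, $\nabla^2\varphi(z) = (\nabla^2\chi)(x-z)$ and the change of variables $y \mapsto -y$ that turns $\nu(-dy)$ back into $\nu(dy)$—shows that $A^*\varphi(z) = (A\chi)(x-z)$. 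Plugging $\varphi$ into \eqref{weak} then gives
\begin{equation*}
	\int_{\mbb{R}^d} f(z)(A\chi)(x-z)\,dz = \int_{\mbb{R}^d} f(z) A^*\varphi(z)\,dz = \int_U g(z)\chi(x-z)\,dz = (g \ast \chi)(x),
\end{equation*}
the last equality holding because $\chi(x-\cdot)$ is supported in $B(x,r)\subseteq U$, so only values of $g$ on $U$ are integrated. Combining the two identities yields $A(f \ast \chi)(x) = (g \ast \chi)(x)$, as claimed.

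The main obstacle is the rigorous interchange of integration in the first identity: one must control the singular jump kernel near $y=0$ uniformly in the convolution variable, which is precisely where the integrability $\int \min\{1,|y|^2\}\,\nu(dy) < \infty$ and the $C^2$-smoothing of the mollification enter. The adjoint computation giving $A^*\varphi(z) = (A\chi)(x-z)$ is routine but requires care with the sign conventions in the L\'evy--Khintchine representation of $A^*$.
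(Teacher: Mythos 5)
Your proof is correct and follows essentially the same route as the paper's: mollification gives $f \ast \chi \in C_b^2(\mbb{R}^d) \subseteq \mc{D}(A)$, Fubini yields $A(f \ast \chi) = f \ast A\chi$, and the identity $(A\chi)(x-\cdot) = A^*\bigl(\chi(x-\cdot)\bigr)$ lets you test the weak formulation \eqref{weak} with $\varphi = \chi(x-\cdot)$. The only difference is that you spell out the Fubini justification (the bound $C\min\{1,|y|^2\}$) and the adjoint sign computation, which the paper leaves implicit.
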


\begin{proof}
	It is well known that $u:=f \ast \chi \in C_b^{\infty}(\mbb{R}^d)$ and \begin{equation}
	\partial^{\alpha} u = f \ast (\partial^{\alpha} \chi), \qquad \alpha \in \mbb{N}_0^d, \label{p-eq3}
	\end{equation}
	see e.g.\ \cite{mims}. In particular, $u \in C_b^2(\mbb{R}^d) \subseteq \mc{D}(A)$ and \begin{equation*}
		Au(x) = b \cdot \nabla u(x) + \frac{1}{2} \tr(Q \cdot \nabla^2 u(x)) + \int_{y \neq 0} (u(x+y)-u(x) - \nabla u(x) \cdot y \I_{(0,1)}(|y|)) \, \nu(dy).
	\end{equation*}
	Using \eqref{p-eq3} and Fubini's theorem it follows that \begin{equation*}
		Au(x) = (f \ast A\chi)(x) = \int_{\mbb{R}^d} f(y) (A \chi)(x-y) \, dy
	\end{equation*} 
	for all $x \in \mbb{R}^d$. As \begin{equation*}
		(A \chi)(x-y) = (A^* \chi(x-\cdot))(y)
	\end{equation*}
	for the adjoint $A^*$, we get \begin{equation*}
		Au(x) = \int_{\mbb{R}^d} f(y) (A^* \chi(x-\cdot))(y) \, dy.
	\end{equation*}
	If $x \in U$ is such that $B(x,r) \subseteq U$, then $\spt \chi(x-\cdot) \subseteq U$ and so it follows from the definition of the weak solution, cf.\ \eqref{weak}, that \begin{equation*}
		Au(x)= \int_{\mbb{R}^d} g(y) \chi(x-y) \, dy= (g \ast \chi)(x)
	\end{equation*}
	for any such $x$.
\end{proof}

We are now ready to prove Theorem~\ref{main-1}.

\begin{proof}[Proof of Theorem~\ref{main-1}] 
Let $f \in L^{\infty}(\mbb{R}^d)$ and $g \in L^{\infty}(U)$ be such that $Af = g$ weakly in $U$. Pick $\chi \in C_c^{\infty}(\mbb{R}^d)$ such that $\chi \geq 0$, $\spt \chi \subseteq B(0,1)$, $\int \chi(x) \, dx = 1$ and set $\chi_k(x) := k^d \chi(kx)$. If we define \begin{equation*}
	f_k(x) := (f \ast \chi_k)(x) := \int_{\mbb{R}^d} f(y) \chi_k(x-y) \, dy,
\end{equation*}
then Lemma~\ref{p-2} shows that $f_k \in C_b^2(\mbb{R}^d) \subseteq \mc{D}(A)$ and \begin{equation}
	Af_k(x) = g_k(x) := (g \ast \chi_k)(x) \quad \text{for all $x \in U$ with $B(x,1/k) \subseteq U$}. \label{p-eq41}
\end{equation}
\begin{enumerate}[wide, labelwidth=!, labelindent=0pt] 
	\item Set $U_{\delta} := \{x \in U; d(x,U^c)>\delta\}$ for $\delta>0$. By Proposition~\ref{p-3}, there exists a constant $c_1 = c_1(\delta)$ such that \begin{equation*}
	\|f_k\|_{\mc{C}_b^{\alpha}(B(x,\delta/2))} \leq c_1 \left(\|f_k\|_{\infty} + \|Af_k\|_{\infty,U_{\delta/4}}\right)
	\end{equation*}
	for all $x \in U_{\delta}$ and $k \geq 1$. Choosing $k \gg 1$ sufficiently large such that $\frac{1}{k} < \frac{\delta}{4}$ we find from \eqref{p-eq41} and $\|f_k\|_{\infty} \leq \|f\|_{L^{\infty}(\mbb{R}^d)}$ that \begin{equation}
		\|f_k\|_{\mc{C}_b^{\alpha}(B(x,\delta/2))} \leq c_1 \left(\|f\|_{L^{\infty}(\mbb{R}^d)} + \|g\|_{L^{\infty}(U)}\right) \fa x \in U_{\delta}. \label{p-eq43}
	\end{equation}
	It follows from the Arzel\`a-Ascoli theorem that there exists a continuous function $f^{(\delta)}: U_{\delta} \to \mbb{R}$ such that a subsequence $f_{k_j}$ converges pointwise to $f^{(\delta)}$ on $U_{\delta}$. By \eqref{p-eq43}, we have \begin{equation*}
		\frac{|\Delta_h^N f^{(\delta)}(x)|}{|h|^{\alpha}}
		= \lim_{j \to \infty} \frac{|\Delta_h^N f_{k_j}(x)|}{|h|^{\alpha}}
		\leq c_1  \left(\|f\|_{L^{\infty}(\mbb{R}^d)} + \|g\|_{L^{\infty}(U)}\right)
	\end{equation*}
	for all $x \in U_{2\delta}$ and $|h| \leq \delta/3$ where $N \in \{1,2,3\}$ is the smallest integer larger than $\alpha$. Hence, \begin{equation*}
		\|f^{(\delta)}\|_{\mc{C}_b^{\alpha}(U_{2\delta})} 
		\leq c_2 \left(\|f\|_{L^{\infty}(\mbb{R}^d)} + \|g\|_{L^{\infty}(U)}\right).
	\end{equation*}
	On the other hand, $f_k \to f$ in $L^1(dx)$ and so $f=f^{(\delta)}$ Lebesgue almost everywhere on $U_{\delta}$. As $f^{(\delta')}= f^{(\delta)}$ on $U_{\delta}$ for $\delta' < \delta$, the mapping \begin{equation*}
		\tilde{f}(x) := \begin{cases} f^{(\delta)}(x), & \text{if $x \in U_{\delta}$, $\delta>0$}, \\ f(x), & \text{if $x \in U^c$}, \end{cases}
	\end{equation*}
	is well defined. Clearly, $\tilde{f}=f$ Lebesgue almost everywhere and the interior Schauder estimate \begin{equation*}
		\|\tilde{f}\|_{\mc{C}_b^{\alpha}(U_{\delta})} 
		\leq C_{\delta}  \left(\|f\|_{L^{\infty}(\mbb{R}^d)} + \|g\|_{L^{\infty}(U)}\right)
	\end{equation*}
	holds for all $\delta>0$. 
	\item Assume now additionally that $f \in \mc{C}_b^{\kappa}(\mbb{R}^d)$ and $g \in \mc{C}_b^{\kappa}(U)$ for some $\kappa>0$. Since $A|_{C_b^2(\mbb{R}^d)}$ commutes with the shift operator, i.e. \begin{equation*}
		(A\varphi)(x+x_0) = (A\varphi(\cdot+x_0))(x), \qquad \varphi \in C_b^2(\mbb{R}^d), \,x,x_0 \in \mbb{R}^d,
	\end{equation*}
	cf.\ \eqref{gen}, it follows easily by induction that \begin{equation*}
		A(\Delta_h^N \varphi) = \Delta_h^N (A \varphi) \fa \varphi \in C_b^2(\mbb{R}^d),\, N \in \mbb{N}.
	\end{equation*}
	Hence, by \eqref{p-eq41}, \begin{equation}
		A(\Delta_h^N f_k)(x) = (\Delta_h^N Af_k)(x) = \Delta_h^N g_k(x) \label{p-eq47}
	\end{equation}
	for all $N \in \mathbb{N}$, $x \in U_{\delta}$, $|h| < \delta/(2N)$ and $k \gg 1$ with $\frac{1}{k}<\frac{\delta}{2}$. Let $N \in \mbb{N}$ be the smallest number which is strictly larger than $\kappa$. Applying part~\eqref{main-1-i} to $x \mapsto \Delta_h^N f_k(x)$, we obtain from \eqref{p-eq47} that \begin{equation*}
		\|\Delta_h^N f_k\|_{\mc{C}_b^{\alpha}(U_{2\delta})} 
		\leq c_1 \left( \|\Delta_h^N f_k\|_{\infty} + \|\Delta_h^N g_k\|_{\infty, U_{\delta}}\right)
	\end{equation*}
	for every $\delta \in (0,1)$ and some constant $c_1=c_1(\delta)>0$ not depending on $f$, $g$ and $k$. Since $f \in C_b^{\kappa}(\mbb{R}^d)$ and $g \in \mc{C}_b^{\kappa}(U)$, this implies \begin{equation*}
		\|\Delta_h^N f_k\|_{\mc{C}_b^{\alpha}(U_{2\delta})} 
		\leq c_2 |h|^{\kappa} \left( \|f\|_{\mc{C}_b^{\kappa}(\mbb{R}^d)} + \|g\|_{\mc{C}_b^{\kappa}(U)} \right)
	\end{equation*}
	for all $|h| < \delta/(2N)$. If we denote by $M \in \mbb{N}$ the smallest number strictly larger than $\alpha$, then we get \begin{equation*}
		|\Delta_h^M \Delta_h^N f_k(x)| \leq c_3 |h|^{\kappa+\alpha}  \left( \|f\|_{\mc{C}_b^{\kappa}(\mbb{R}^d)} + \|g\|_{\mc{C}_b^{\kappa}(U)} \right)
	\end{equation*}
	for all $x \in U_{3\delta}$ and small $h$. The left-hand side converges to $\Delta_h^M \Delta_h^N f(x) = \Delta_h^{M+N} f(x)$ as $k \to \infty$ because $f$ is continuous. Applying Lemma~\ref{def-11} finishes the proof. \qedhere
\end{enumerate}
\end{proof}

For the proof of Corollary~\ref{main-3} we need another auxiliary result.

\begin{lem} \label{p-5}
	Let $(X_t)_{t \geq 0}$ be a L\'evy process with characteristic exponent $\psi$ satisfying the Hartman--Wintner condition \eqref{A1}. Let $f \in \mc{B}_b(\mbb{R}^d)$ be such that \begin{equation}
		\lim_{t \to 0} |\mbb{E}f(x+X_t)-f(x)| = 0, \qquad x \in U, \label{p-eq51}
	\end{equation}
	for an open set $U \subseteq \mbb{R}^d$. If $\tilde{f} \in C_b(U)$ is such that $f = \tilde{f}$ Lebesgue almost everywhere on $U$, then $f=\tilde{f}$ on $U$; in particular, $f|_U$ is continuous.
\end{lem}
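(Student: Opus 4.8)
The plan is to exploit the Hartman--Wintner condition \eqref{A1}, which (via Remark~\ref{main-0}\eqref{main-0-i}) guarantees that $X_t$ has a smooth bounded density $p_t$ for each $t>0$. This lets me rewrite the semigroup action as a genuine convolution, $\mbb{E}f(x+X_t) = \int_{\mbb{R}^d} f(x+y) p_t(y)\,dy = (f \ast \check{p}_t)(x)$ where $\check{p}_t(y) := p_t(-y)$. The crucial point is that this convolution \emph{only sees $f$ up to Lebesgue-null sets}: if $f = \tilde{f}$ Lebesgue almost everywhere on $U$, I want to conclude that $\mbb{E}f(x+X_t)$ and $\mbb{E}\tilde{f}(x+X_t)$ agree, at least for $x$ well inside $U$ and $t$ small. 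Once that is established, the hypothesis \eqref{p-eq51} together with continuity of $\tilde f$ will force $f(x)=\tilde f(x)$ pointwise.

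First I would fix a point $x_0 \in U$ and choose $r>0$ with $B(x_0, 2r) \subseteq U$. For $x \in B(x_0,r)$ I split the convolution integral into the part over $B(0,r)$ and its complement. On the inner part, $x+y$ ranges within $U$, so $f(x+y) = \tilde{f}(x+y)$ for Lebesgue-almost-every $y$, and since $p_t$ is integrable this piece equals the corresponding integral with $\tilde{f}$. The outer part, $\int_{|y|>r} f(x+y)p_t(y)\,dy$, is bounded by $\|f\|_{\infty} \int_{|y|>r} p_t(y)\,dy = \|f\|_{\infty}\,\mbb{P}(|X_t|>r)$, which tends to $0$ as $t \to 0$ because $X_t \to 0$ in probability (a L\'evy process is stochastically continuous with $X_0=0$); the same bound applies with $\tilde f$ in place of $f$. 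Hence as $t \to 0$,
\begin{equation*}
	\mbb{E}f(x+X_t) - \mbb{E}\tilde{f}(x+X_t) = \int_{|y|>r}\bigl(f(x+y)-\tilde{f}(x+y)\bigr)p_t(y)\,dy \longrightarrow 0
\end{equation*}
uniformly for $x \in B(x_0,r)$.

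Next I would use that $\tilde{f}$ is continuous at $x_0$ to handle its semigroup action directly: by stochastic continuity and boundedness of $\tilde f$, a standard dominated-convergence-type argument gives $\mbb{E}\tilde{f}(x_0+X_t) \to \tilde{f}(x_0)$ as $t \to 0$, once one notes that the mass of $p_t$ concentrates near $0$. Combining this with the hypothesis \eqref{p-eq51}, namely $\mbb{E}f(x_0+X_t) \to f(x_0)$, and the convergence of the difference to $0$ just established, I obtain
\begin{equation*}
	f(x_0) = \lim_{t \to 0}\mbb{E}f(x_0+X_t) = \lim_{t \to 0}\mbb{E}\tilde{f}(x_0+X_t) = \tilde{f}(x_0).
\end{equation*}
Since $x_0 \in U$ was arbitrary, $f = \tilde{f}$ on all of $U$, and in particular $f|_U$ is continuous.

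I expect the main obstacle to be the careful justification that the semigroup evaluated on $f$ is insensitive to the Lebesgue-null modification, i.e.\ the localization argument separating the inner integral (where the a.e.\ equality bites) from the outer tail (controlled by stochastic continuity). One must be slightly careful that the density $p_t$ is only guaranteed for $t>0$, not at $t=0$, so every statement is made along $t \to 0^+$ and the null set where $f \neq \tilde f$ never contributes because it is integrated against an absolutely continuous measure. A minor secondary point is verifying $\mbb{E}\tilde f(x_0+X_t)\to\tilde f(x_0)$ using only boundedness plus continuity at $x_0$ rather than global continuity or vanishing at infinity; this follows from splitting into $B(0,r)$ and its tail exactly as above and invoking $\mbb{P}(|X_t|>r)\to 0$.
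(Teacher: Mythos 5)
Your proof is correct and follows essentially the same route as the paper: the Hartman--Wintner condition enters only through the existence of a density $p_t$ (so the semigroup cannot distinguish $f$ from $\tilde f$ for $t>0$), and stochastic continuity together with continuity of $\tilde f$ at the point gives $\mbb{E}\tilde{f}(x_0+X_t) \to \tilde{f}(x_0)$, after which \eqref{p-eq51} closes the argument. The only cosmetic point is that the expression $\mbb{E}\tilde{f}(x_0+X_t)$ requires $\tilde f$ to be defined on all of $\mbb{R}^d$, not just on $U$; extending it by $f$ on $U^c$ --- which is precisely the paper's auxiliary function $u := \tilde{f}\I_U + f\I_{U^c}$ --- makes your splitting into the inner ball and the tail literally rigorous.
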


\begin{proof}
	The function \begin{equation*}
		u := \tilde{f} \I_{U} + f \I_{U^c},
	\end{equation*}
	satisfies $u \in \mc{B}_b(\mbb{R}^d)$ and $f = u$ Lebesgue almost everywhere on $\mbb{R}^d$. Since the characteristic exponent $\psi$ satisfies the Hartman--Wintner condition, the law of $X_t$ has a density $p_t$ with respect to Lebesgue measure for $t>0$, and so \begin{equation*}
		\mbb{E} u(x+X_t) = \mbb{E} f(x+X_t) \fa x \in \mbb{R}^d,t>0. 
	\end{equation*}
	If we can show that \begin{equation}
		\lim_{t \to 0} \mbb{E} u(x+x_t) = u(x)= \tilde{f}(x) \fa x \in U, \label{p-eq53}
	\end{equation}
	then it follows immediately from \eqref{p-eq51} that \begin{equation*}
		f(x) =  \lim_{t \to 0} \mbb{E}f(x+X_t) = \lim_{t \to 0} \mbb{E}u(x+X_t) = \tilde{f}(x) \fa x \in U,
	\end{equation*}
	which proves the assertion.  To prove \eqref{p-eq53}, fix $x \in U$. As \begin{equation*}
		|\mbb{E}u(x+X_t)-u(x)|
		\leq \sup_{|y| \leq \delta} |u(x+y)-u(x)| + 2 \|u\|_{L^{\infty}(\mbb{R}^d)} \mbb{P} \left( \sup_{s \leq t} |X_s|>\delta \right),
	\end{equation*}
	we find from the right-continuity of the sample paths and the monotone convergence theorem that \begin{equation*}
		\limsup_{t \to 0} |\mbb{E}u(x+X_t)-u(x)| \leq \sup_{|y| \leq \delta} |u(x+y)-u(x)|.
	\end{equation*}
	Since $u$ is continuous at $x$, the right-hand side tends to $0$ as $\delta \to 0$, and this gives \eqref{p-eq53}.
\end{proof}
	
\begin{bem} \label{p-7}
	The proof of Lemma~\ref{p-5} shows the following statement: If $(X_t)_{t \geq 0}$ is a L\'evy process, then \begin{equation*}
		\lim_{t \to 0} \mbb{E}f(x+X_t) = f(x)
	\end{equation*}
	holds for any continuity point $x$ of $f \in \mc{B}_b(\mbb{R}^d)$; this is a localized version of the continuity of the semigroup $T_t f(x) := \mbb{E}f(x+X_t)$ at $t=0$.
\end{bem}

\begin{proof}[Proof of Corollary~\ref{main-3}]
	Let $\varphi \in C_c^{\infty}(U)$. Because of the uniform boundedness assumption \eqref{main-eq23}, it follows from the dominated convergence theorem that \begin{equation*}
		\int_{\mbb{R}^d} g(x) \varphi(x) \, dx
		= \lim_{t \to 0} \frac{1}{t}  \left( \mbb{E} \int_{\mbb{R}^d} f(x+X_t) \varphi(x) \, dx - \int_{\mbb{R}^d} f(x) \varphi(x) \, dx \right).
	\end{equation*}
	Performing a change of variables we get \begin{align*}
		\int_{\mbb{R}^d} g(x) \varphi(x) \, dx	
	 	&=\lim_{t \to 0} \frac{1}{t}  \left( \mbb{E} \int_{\mbb{R}^d} f(x) \varphi(x-X_t) \, dx - \int_{\mbb{R}^d} f(x) \varphi(x) \, dx \right) \\
	 	&= \int_{\mbb{R}^d} f(x) A^* \varphi(x) \, dx,
	\end{align*}
	i.e.\ $f$ is a weak solution to $Af=g$ on $U$. If $g \in L^{\infty}(U)$ then it follows from Theorem~\ref{main-1}\eqref{main-1-i} that there exists a function $\tilde{f} \in L^{\infty}(\mbb{R}^d)$ such that $f=\tilde{f}$ Lebesgue almost everywhere, $\tilde{f} \in C(U)$ and \begin{equation}
		\|\tilde{f}\|_{\mc{C}_b^{\alpha}(U_{\delta})} \leq C_{\delta} (\|f\|_{L^{\infty}(\mbb{R}^d)}+ \|g\|_{\infty,U}), \qquad \delta>0. \label{p-eq55}
	\end{equation}
	By Lemma~\ref{p-5}, we have $f=\tilde{f}$ on $U$, and consequently \eqref{p-eq55} holds with $\tilde{f}$ replaced by $f$; this proves the first assertion. The second assertion follows directly from Theorem~\ref{main-1}\eqref{main-1-ii}.
\end{proof}

\begin{proof}[Proof of Corollary~\ref{main-5}]

The first assertion is immediate from Corollary~\ref{main-3}\eqref{main-3-i}. For \eqref{main-5-ii}, we note that $Af \in \mc{C}_b^{\kappa}(\mbb{R}^d)$ implies $f \in \mc{C}_b^{\kappa+\alpha}(\mbb{R}^d)$ and \begin{equation*}
	\|f\|_{\mc{C}_b^{\kappa}(\mbb{R}^d)} 
	\leq \|f\|_{\mc{C}_b^{\kappa+\alpha}(\mbb{R}^d)} 
	\leq c \left( \|f\|_{\infty} + \|Af\|_{\infty} \right),
\end{equation*}
cf.\ \cite[Theorem 1.1]{reg-levy}, and so Corollary~\ref{main-3}\eqref{main-3-ii} applies.\end{proof}

\section{Examples} \label{ex}

In this section, we present examples of L\'evy processes for which interior Schauder estimates can be obtained from Theorem~\ref{main-1}, Corollary~\ref{main-3} and Corollary~\ref{main-5}. We start with two tools which are useful to construct wide classes of L\'evy processes satisfying the assumptions \eqref{A1}-\eqref{A3} of our main results.

\begin{prop} \label{ex-1}
	Let $(X_t^{(i)})_{t \geq 0}$, $i=1,2$, be independent $\mbb{R}^{d_i}$-valued L\'evy processes. \begin{enumerate}
	\item\label{ex-1-i} ($d_1=d_2$) If $(X_t^{(1)})_{t \geq 0}$ satisfies \eqref{A1} and \eqref{A2} for some $\alpha>0$, then the L\'evy process \begin{equation*}
		Y_t := X_t^{(1)}+X_t^{(2)}, \qquad t \geq 0,
	\end{equation*}
	satisfies \eqref{A1} and \eqref{A2} for the same constant $\alpha$.
	\item\label{ex-1-ii} If $(X_t^{(i)})_{t \geq 0}$ satisfies \eqref{A1} and \eqref{A2} for a constant $\alpha_i>0$, $i=1,2$, then \begin{equation*}
		Z_t := \begin{pmatrix} X_t^{(1)} \\ X_t^{(2)} \end{pmatrix}, \qquad t \geq 0,
	\end{equation*}
	satisfies \eqref{A1} and \eqref{A2} with $\alpha := \min\{\alpha_1,\alpha_2\}$.
\end{enumerate} \end{prop}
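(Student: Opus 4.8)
The plan is to verify conditions \eqref{A1} and \eqref{A2} for the composite processes by working directly with the characteristic exponents, since both conditions are phrased in terms of $\psi$ (via the Hartman--Wintner growth) and the transition density (via the gradient estimate). The key structural fact is that independence translates into additivity of characteristic exponents: if $\psi_1,\psi_2$ are the characteristic exponents of $X^{(1)},X^{(2)}$, then in part \eqref{ex-1-i} the sum $Y_t$ has exponent $\psi_Y(\xi)=\psi_1(\xi)+\psi_2(\xi)$, while in part \eqref{ex-1-ii} the product process $Z_t$ has exponent $\psi_Z(\xi_1,\xi_2)=\psi_1(\xi_1)+\psi_2(\xi_2)$ for $\xi=(\xi_1,\xi_2)\in\mbb{R}^{d_1}\times\mbb{R}^{d_2}$. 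Moreover, by independence the transition density of $Y_t$ is the convolution $p_t^Y = p_t^{(1)} \ast p_t^{(2)}$, and the transition density of $Z_t$ is the tensor product $p_t^Z(\xi_1,\xi_2)=p_t^{(1)}(\xi_1)\,p_t^{(2)}(\xi_2)$.

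For part \eqref{ex-1-i}, I would first check \eqref{A1}: since $\re\psi_Y = \re\psi_1 + \re\psi_2$ and both summands have nonnegative real part (a general property of L\'evy exponents), we have $|\re\psi_Y(\xi)| \geq |\re\psi_1(\xi)|$, so the Hartman--Wintner growth for $\psi_1$ forces the same for $\psi_Y$. For \eqref{A2}, I would use Young's convolution inequality together with $\nabla p_t^Y = (\nabla p_t^{(1)}) \ast p_t^{(2)}$ to bound
\begin{equation*}
	\int_{\mbb{R}^d} |\nabla p_t^Y(x)|\,dx \leq \left(\int_{\mbb{R}^d} |\nabla p_t^{(1)}(x)|\,dx\right)\left(\int_{\mbb{R}^d} p_t^{(2)}(x)\,dx\right) = \int_{\mbb{R}^d} |\nabla p_t^{(1)}(x)|\,dx \leq M t^{-1/\alpha},
\end{equation*}
using that $p_t^{(2)}$ is a probability density. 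This immediately gives \eqref{A2} for $Y$ with the same $\alpha$ and constant. One technical point is that I must justify differentiating the convolution; this follows since \eqref{A1} for $X^{(1)}$ already guarantees $p_t^{(1)}\in C_b^{\infty}$ with integrable gradient, cf.\ Remark~\ref{main-0}\eqref{main-0-i}.

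For part \eqref{ex-1-ii}, the Hartman--Wintner check is similar: writing $|\xi|^2=|\xi_1|^2+|\xi_2|^2$, at least one coordinate block grows, and since $\re\psi_Z(\xi_1,\xi_2)=\re\psi_1(\xi_1)+\re\psi_2(\xi_2)$ with nonnegative summands, the condition for whichever block tends to infinity carries over; one argues that $|\re\psi_Z|/\log(1+|\xi|)\to\infty$ by splitting into the regimes where $|\xi_1|$ or $|\xi_2|$ dominates. For \eqref{A2}, the tensor structure gives $\nabla p_t^Z$ in terms of $\nabla_{\xi_1} p_t^{(1)}\otimes p_t^{(2)}$ and $p_t^{(1)}\otimes \nabla_{\xi_2}p_t^{(2)}$, so by the triangle inequality and Fubini,
\begin{equation*}
	\int |\nabla p_t^Z|\,d\xi \leq \int |\nabla p_t^{(1)}|\,d\xi_1 + \int |\nabla p_t^{(2)}|\,d\xi_2 \leq M_1 t^{-1/\alpha_1} + M_2 t^{-1/\alpha_2} \leq (M_1+M_2) t^{-1/\alpha}
\end{equation*}
for $\alpha=\min\{\alpha_1,\alpha_2\}$ and $t\in(0,1)$, where the last step uses $t^{-1/\alpha_i}\leq t^{-1/\alpha}$ since $t<1$.

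The main obstacle I anticipate is the Hartman--Wintner verification in part \eqref{ex-1-ii}: unlike \eqref{A2}, which is a clean additive/multiplicative estimate, \eqref{A1} is a limit that requires care because $|\xi|\to\infty$ can occur while one block, say $|\xi_2|$, stays bounded. In that regime $\re\psi_2(\xi_2)$ contributes nothing to the growth, so one must ensure $\re\psi_1(\xi_1)$ alone dominates $\log(1+|\xi|)$; this needs the observation that $|\xi_1|$ must then tend to infinity and $\log(1+|\xi|)\asymp\log(1+|\xi_1|)$ up to bounded additive error, so the Hartman--Wintner ratio for $\psi_1$ controls the ratio for $\psi_Z$. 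Handling all subsequential regimes of $(|\xi_1|,|\xi_2|)$ uniformly is the delicate part, and I would organize it by a compactness/subsequence argument: any sequence $\xi^{(n)}$ with $|\xi^{(n)}|\to\infty$ has a subblock tending to infinity, and along it the corresponding $\psi_i$ supplies the required growth.
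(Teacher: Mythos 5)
Your overall strategy coincides with the paper's proof (additivity of the characteristic exponents for \eqref{A1}; convolution respectively tensor structure of the densities plus Fubini/Tonelli for \eqref{A2}), but there is one genuine gap in part \eqref{ex-1-i}: you write the transition density of $Y_t$ as $p_t^Y = p_t^{(1)} \ast p_t^{(2)}$ and later use that ``$p_t^{(2)}$ is a probability density''. In part \eqref{ex-1-i}, however, only $(X_t^{(1)})_{t\geq 0}$ is assumed to satisfy \eqref{A1}--\eqref{A2}; the process $(X_t^{(2)})_{t \geq 0}$ is an arbitrary independent L\'evy process, and its law at time $t$ need not be absolutely continuous -- think of a compound Poisson process (whose law has an atom at the origin) or a pure drift. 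So $p_t^{(2)}$ may simply not exist, and both the factorization $p_t^Y=p_t^{(1)}\ast p_t^{(2)}$ and the normalization $\int p_t^{(2)}\,dx=1$ are unavailable as written. The paper's proof avoids this by convolving with the \emph{law} $\mu_t$ of $X_t^{(2)}$: it writes $p_t^Y(x)=\int_{\mbb{R}^d} p_t^{(1)}(x-y)\,\mu_t(dy)$, differentiates under the integral sign (using $p_t^{(1)}\in C_b^{\infty}$, which follows from \eqref{A1} for $X^{(1)}$), and then applies Tonelli to get $\int |\nabla p_t^Y|\,dx \leq \int\!\!\int |\nabla p_t^{(1)}(x-y)|\,dx\,\mu_t(dy) \leq M t^{-1/\alpha}$. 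Your Young-type computation goes through verbatim after replacing $p_t^{(2)}(y)\,dy$ by $\mu_t(dy)$ (Young's inequality holds for convolution of an $L^1$ function with a finite measure), so the repair is a one-line change -- but as written the step is unjustified.

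Everything else is correct and matches the paper. The \eqref{A1} argument in part \eqref{ex-1-i} ($\re\psi_Y \geq \re\psi_1 \geq 0$) is exactly the paper's. In part \eqref{ex-1-ii} both marginal processes do satisfy \eqref{A1}, so the product density $p_t^Z(x,y)=p_t^{(1)}(x)p_t^{(2)}(y)$ exists and your triangle-inequality/Fubini bound, together with $t^{-1/\alpha_i}\leq t^{-1/\min\{\alpha_1,\alpha_2\}}$ for $t\in(0,1)$, is the paper's argument. Your subsequence/regime analysis for the Hartman--Wintner condition of the product process is sound and in fact more careful than the paper, which dismisses that verification as obvious.
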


\begin{proof} \begin{enumerate}[wide, labelwidth=!, labelindent=0pt] 
	\item Set $d:=d_1=d_2$. The characteristic exponent of $(Y_t)_{t \geq 0}$ equals $\psi := \psi^{(1)} + \psi^{(2)}$ where $\psi^{(i)}$ is the characteristic exponent of $X_t^{(i)}$, $i=1,2$. In particular, $\re \psi \geq \re \psi^{(1)}$, and therefore the Hartman--Wintner condition \eqref{A1} for $\psi^{(1)}$ implies that $\psi$ satisfies \eqref{A1}. Consequently, the law of $Y_t$ has a density $p_t$ with respect to Lebesgue measure for $t>0$, and it satisfies \begin{equation*}
		p_t(x) = \int_{\mbb{R}^{d}} p_t^{(1)}(x-y) \, \mu_t(dy), \qquad x \in \mbb{R}^d,\,t>0,
	\end{equation*}
	where $p_t^{(1)}$ is the density of $X_t^{(1)}$ and $\mu_t$ is the law of $X_t^{(2)}$. As $p_t^{(1)} \in C_b^{\infty}(\mbb{R}^{d})$, cf.\ Remark~\ref{main-0}\eqref{main-0-i}, it follows easily from the differentiation lemma for parametrized integrals, see e.g.\ \cite{mims}, that \begin{equation*}
		\nabla p_t(x) = \int_{\mbb{R}^{d}} \nabla_x p_t^{(1)}(x-y) \, \mu_t(dy), \qquad x \in \mbb{R}^d, \,t>0.
	\end{equation*}
	Hence, by Tonelli's theorem and the gradient estimate \eqref{A2} for $p_t^{(1)}$, \begin{equation*}
		\int_{\mbb{R}^{d}} |\nabla p_t(x)| \, dx
		\leq \int_{\mbb{R}^{d}} \int_{\mbb{R}^{d}} |\nabla_x p_t^{(1)}(x-y)| \, dx \, \mu_t(dy)
		\leq c t^{-1/\alpha}, \qquad t \in (0,1).
	\end{equation*}
	\item It is obvious that the characteristic exponent $(\xi,\eta) \mapsto \psi^{(1)}(\xi) + \psi^{(2)}(\eta)$ of $(Z_t)_{t \geq 0}$ satisfies \eqref{A1} whenever $\psi^{(1)}$ and $\psi^{(2)}$ satisfy \eqref{A1}. Since the density of $Z_t$ is given by \begin{equation*}
		p_t(x,y) = p_t^{(1)}(x) p_t^{(2)}(y), \qquad x \in \mbb{R}^{d_1},\,y \in \mbb{R}^{d_2},\,t>0
	\end{equation*}
	we can follow the reasoning from the first part, i.e.\ apply the differentiation lemma and Tonelli's theorem, to find that \begin{equation*}
		\int_{\mbb{R}^{d_1+d_2}} |\nabla p_t(z)| \, dz
		\leq c t^{-1/\min\{\alpha_1,\alpha_2\}}, \qquad t \in (0,1). \qedhere
	\end{equation*}
\end{enumerate} \end{proof}

\begin{kor} \label{ex-3}
	Let $(X_t)_{t \geq 0}$ be a L\'evy process with L\'evy triplet $(b,Q,\nu)$. If $Q$ is positive definite, then the interior Schauder estimates in Theorem~\ref{main-1}, Corollary~\ref{main-3} and Corollary~\ref{main-5} hold with $\alpha=2$.
\end{kor}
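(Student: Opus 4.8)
The plan is to verify that the hypotheses \eqref{A1}--\eqref{A3} hold with $\alpha=2$ and then to invoke Theorem~\ref{main-1}, Corollary~\ref{main-3}, and Corollary~\ref{main-5} directly. The structural idea is to exploit the L\'evy--It\^o splitting: write $X_t = W_t + Y_t$, where $(W_t)_{t \geq 0}$ is a Brownian motion with L\'evy triplet $(0,Q,0)$ and $(Y_t)_{t \geq 0}$ is an independent L\'evy process with triplet $(b,0,\nu)$, so that the characteristic exponents add up to the exponent $\psi$ of $X$. This places us exactly in the setting of Proposition~\ref{ex-1}\eqref{ex-1-i} (with $d_1=d_2=d$), which reduces checking \eqref{A1} and \eqref{A2} for $X$ to checking them for the Gaussian summand $W$ alone; this is the crucial point, since for the full process $X$ no explicit transition density is available.

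First I would verify \eqref{A1} and \eqref{A2} for $W$. Its characteristic exponent is $\psi^{(1)}(\xi) = \tfrac{1}{2}\xi \cdot Q\xi$, and since $Q$ is positive definite there is $\lambda>0$ with $\re \psi^{(1)}(\xi) \geq \tfrac{\lambda}{2}|\xi|^2$; this quadratic lower bound trivially dominates $\log(1+|\xi|)$, giving the Hartman--Wintner condition \eqref{A1}. For \eqref{A2} I would use the explicit Gaussian density $p_t^{(1)}(x) = (2\pi t)^{-d/2}(\det Q)^{-1/2} \exp(-\tfrac{1}{2t} x \cdot Q^{-1}x)$, whose gradient is $\nabla p_t^{(1)}(x) = -t^{-1} Q^{-1}x \, p_t^{(1)}(x)$. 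Thus $\int_{\mbb{R}^d} |\nabla p_t^{(1)}(x)| \, dx = t^{-1}\, \mbb{E}|Q^{-1}W_t|$, and the Gaussian self-similarity $W_t \stackrel{\mathrm{d}}{=} \sqrt{t}\,W_1$ yields $\mbb{E}|Q^{-1}W_t| = \sqrt{t}\,\mbb{E}|Q^{-1}W_1|$, whence $\int_{\mbb{R}^d}|\nabla p_t^{(1)}(x)|\,dx = c\,t^{-1/2}$, which is precisely \eqref{A2} with $\alpha=2$.

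By Proposition~\ref{ex-1}\eqref{ex-1-i}, the process $X = W + Y$ then inherits \eqref{A1} and \eqref{A2} with the same constant $\alpha=2$. It remains to confirm \eqref{A3}: since $\alpha=2>1$, the first alternative in \eqref{A3} is satisfied, and, as noted in Remark~\ref{main-0} (using that $\int_{|y| \leq 1}|y|^2\,\nu(dy)<\infty$ for every L\'evy measure), we may take $\gamma=2$; the balance inequality $\alpha+1 = 3 > 2 = \gamma$ is then immediate. With \eqref{A1}--\eqref{A3} established for $\alpha=2$, the asserted interior Schauder estimates follow at once from Theorem~\ref{main-1}, Corollary~\ref{main-3}, and Corollary~\ref{main-5}.

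I do not expect a serious obstacle here: the argument is essentially a verification of hypotheses. The only step requiring a little care is the gradient estimate for $W$, where the key realization is that the $L^1$-norm of $\nabla p_t^{(1)}$ scales like $t^{-1/2}$ through the Gaussian self-similarity, rather than trying to estimate $\nabla p_t$ for $X$ directly. The decomposition into independent Gaussian and jump parts, combined with Proposition~\ref{ex-1}\eqref{ex-1-i}, is exactly what circumvents the absence of an explicit density for $X$.
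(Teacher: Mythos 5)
Your proposal is correct and follows essentially the same route as the paper: decompose $X$ into an independent Gaussian part and a jump part with triplet $(b,0,\nu)$, verify \eqref{A1} and \eqref{A2} with $\alpha=2$ for the Gaussian summand, transfer these to $X$ via Proposition~\ref{ex-1}\eqref{ex-1-i}, and note that \eqref{A3} is trivial with $\gamma=2$. The only difference is that you carry out the Gaussian gradient estimate explicitly (via $\nabla p_t^{(1)} = -t^{-1}Q^{-1}x\,p_t^{(1)}$ and self-similarity), where the paper simply calls this step straightforward.
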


If there is no jump part, i.e.\ $\nu=0$, then the infinitesimal generator associated with $(X_t)_{t \geq 0}$ is given by \begin{equation*}
	Af(x) = b \cdot \nabla f(x) + \frac{1}{2} \tr(Q \cdot \nabla^2 f(x)), \qquad f \in C_c^{\infty}(\mbb{R}^d),
\end{equation*}
and Corollary~\ref{ex-3} yields the classical interior Schauder estimates for solutions to the equation $Af=g$ associated with the second order differential operator $A$, see e.g.\ \cite{gilbarg}. 

\begin{proof}[Proof of Corollary~\ref{ex-3}]
	The L\'evy process $(X_t)_{t \geq 0}$ has a representation of the form \begin{equation*}
		X_t = Q W_t + J_t, \qquad t \geq 0,
	\end{equation*}
	where $(B_t)_{t \geq 0}$ is a Brownian motion and $(J_t)_{t \geq 0}$ is a L\'evy process with L\'evy triplet $(b,0,\nu)$. Since the transition density of $QB_t$ is of Gaussian type and $Q$ is positive definite, it is straightforward to check that $(QB_t)_{t \geq 0}$ satisfies \eqref{A1} and \eqref{A2} with $\alpha=2$. The L\'evy process $(J_t)_{t \geq 0}$ is independent of $(QB_t)_{t \geq 0}$, see e.g.\ \cite[Theorem II.6.3]{ikeda}, and therefore Proposition~\ref{ex-1}\eqref{ex-1-i} shows that $(X_t)_{t \geq 0}$ satisfies \eqref{A1} and \eqref{A2} with $\alpha=2$. If we choose $\gamma=2$, then $\int_{|y| \leq 1} |y|^{\gamma} \, \nu(dy)<\infty$ and the balance condition \eqref{A3} is trivial. Hence, the assumptions \eqref{A1}-\eqref{A3} of Theorem~\ref{main-1}, Corollary~\ref{main-3} and Corollary~\ref{main-5} are satisfied for $\alpha=\gamma=2$.
\end{proof}

Our next result applies to a large class of jump L\'evy processes, including stable L\'evy processes. It  is a direct consequence of the gradient estimates obtained in \cite{ssw12}.

\begin{kor} \label{ex-5}
	Let $(X_t)_{t \geq 0}$ be a pure-jump L\'evy process. Assume that its L\'evy measure $\nu$ satisfies \begin{align} \begin{aligned}
		\nu(B) &\geq \int_0^{r_0} \!\! \int_{\mbb{S}^{d-1}} \I_B(r \theta) r^{-1-\varrho} \, \mu(d\theta) \, dr \\ &\quad + \int_{r_0}^{\infty}\!\! \int_{\mbb{S}^{d-1}} \I_B(r\theta) r^{-1-\beta} \, \mu(d\theta) \, dr, \quad B \in \mc{B}(\mbb{R}^d \backslash \{0\}) \end{aligned} \label{ex-eq11}
	\end{align}
	for some constants $r_0>0$, $\varrho \in (0,2)$, $\beta \in (0,\infty]$ and a finite measure $\mu$ on the unit sphere $\mbb{S}^{d-1} \subseteq \mbb{R}^d$ which is non-degenerate, in the sense that its support is not contained in $\mbb{S}^{d-1} \cap V$ for some lower-dimensional subspace $V \subseteq \mbb{R}^d$. If \begin{equation}
		\int_{|y| \leq 1} |y|^{\gamma} \, \nu(dy) < \infty \label{ex-eq13}
	\end{equation}
	for some $\gamma < 1+\varrho$, then the interior Schauder estimates from Theorem~\ref{main-1}, Corollary~\ref{main-3} and Corollary~\ref{main-5} hold with $\alpha=\varrho$.
\end{kor}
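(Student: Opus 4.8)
The plan is to verify that the hypotheses \eqref{A1}--\eqref{A3} of Theorem~\ref{main-1} hold with $\alpha=\varrho$, so that the interior Schauder estimates follow immediately. The assumption \eqref{ex-eq13} with $\gamma<1+\varrho=1+\alpha$ is exactly the balance condition required in \eqref{A3} (note that $(X_t)_{t\geq 0}$ is pure-jump, so $Q=0$ and the dichotomy ``$\alpha>1$ or $Q=0$'' in \eqref{A3} is satisfied automatically). Thus the real work is to extract \eqref{A1} and the gradient estimate \eqref{A2} from the lower bound \eqref{ex-eq11} on the L\'evy measure, and for this I would invoke the gradient estimates of \cite{ssw12}.

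First I would record that the non-degeneracy of the spherical measure $\mu$ together with the $r^{-1-\varrho}$ behaviour of $\nu$ near the origin forces the real part of the characteristic exponent to grow like $|\xi|^{\varrho}$ at infinity. Concretely, writing
\begin{equation*}
	\re\psi(\xi) = \int_{y\neq 0}\bigl(1-\cos(y\cdot\xi)\bigr)\,\nu(dy),
\end{equation*}
the lower bound \eqref{ex-eq11} gives $\re\psi(\xi)\geq c|\xi|^{\varrho}$ for $|\xi|$ large; the non-degeneracy of $\mu$ guarantees this bound is uniform over all directions $\xi/|\xi|$, so that $\re\psi(\xi)\to\infty$ at a polynomial rate and the Hartman--Wintner condition \eqref{A1} holds a fortiori. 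This simultaneously yields, via Remark~\ref{main-0}\eqref{main-0-i}, the existence of a smooth transition density $p_t$.

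Next I would obtain \eqref{A2} from \cite{ssw12}. The gradient estimates there are precisely of the form
\begin{equation*}
	\int_{\mbb{R}^d}|\nabla p_t(x)|\,dx \leq M\,t^{-1/\varrho},\qquad t\in(0,1),
\end{equation*}
and they are derived under a lower scaling condition on $\re\psi$ of exactly the type produced by \eqref{ex-eq11}; the role of the cutoff radius $r_0$ and of the separate tail exponent $\beta$ is only to control the large-jump part, which does not affect the small-time, high-frequency behaviour that governs the gradient estimate. I would therefore verify that the hypotheses of the relevant theorem in \cite{ssw12} are implied by \eqref{ex-eq11}--one checks the required lower bound on $\re\psi$ near infinity together with the non-degeneracy of $\mu$--and read off \eqref{A2} with $\alpha=\varrho$.

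The main obstacle I anticipate is the careful matching of the hypotheses of \cite{ssw12} with the scaling bound \eqref{ex-eq11}, in particular making sure that the lower bound on $\re\psi$ extracted from $\nu$ is the precise quantity (a uniform lower scaling exponent $\varrho$) that the cited gradient estimate requires, and confirming that the constant $\alpha$ produced coincides with $\varrho$ rather than some other exponent determined by $\beta$. Once \eqref{A1} and \eqref{A2} are in place with $\alpha=\varrho$, and since \eqref{ex-eq13} delivers \eqref{A3}, the conclusion is simply an application of Theorem~\ref{main-1}, Corollary~\ref{main-3} and Corollary~\ref{main-5}.
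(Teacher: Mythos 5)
Your overall plan --- verify \eqref{A1}--\eqref{A3} with $\alpha=\varrho$ and then quote Theorem~\ref{main-1}, Corollary~\ref{main-3} and Corollary~\ref{main-5} --- is exactly what the paper intends (it offers no details beyond citing \cite{ssw12}), and two of your three verifications are correct: \eqref{A3} holds since the process is pure jump ($Q=0$) and \eqref{ex-eq13} with $\gamma<1+\varrho$ is the balance condition, and \eqref{A1} follows from the lower bound $\re\psi(\xi)\geq c|\xi|^{\varrho}$ for large $|\xi|$, which you correctly extract from \eqref{ex-eq11} via the substitution $s=r|\xi|$ and compactness of the sphere together with the non-degeneracy of $\mu$.

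The gap is in your justification of \eqref{A2}. The gradient estimates of \cite{ssw12} relevant here are \emph{not} ``derived under a lower scaling condition on $\re\psi$''; they are proved under the hypothesis that the L\'evy measure dominates a non-degenerate stable-like measure, i.e.\ under \eqref{ex-eq11} itself. (The proof in \cite{ssw12} decomposes $X_t$ in law as an independent sum $X_t'+X_t''$, where $X_t'$ has L\'evy measure equal to the lower bound, establishes the estimate for the density of $X_t'$ using its explicit structure, and transfers it to $p_t$ by convolution --- the same transfer mechanism as in Proposition~\ref{ex-1}\eqref{ex-1-i}; the tail exponent $\beta$ only affects $t>1$ and is harmless for \eqref{A2}.) A one-sided bound $\re\psi(\xi)\geq c|\xi|^{\varrho}$ is not sufficient for the $L^1$ gradient estimate: Fourier inversion then controls $\|\nabla p_t\|_{\infty}$, but finiteness of $\int_{\mbb{R}^d}|\nabla p_t(x)|\,dx$ with the right rate requires decay of $\nabla p_t$ at infinity, which is governed by the structure and derivatives of $\psi$, not by its lower envelope. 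This is precisely why the paper's symbol-based criterion, Corollary~\ref{ex-6}, must additionally assume the sector condition and the two-sided bound $\re\psi\asymp|\xi|^{\varrho}$; neither is available here, since \eqref{ex-eq11} imposes no upper bound on $\nu$ and no symmetry. So, as written, your step for \eqref{A2} invokes a theorem of \cite{ssw12} with hypotheses it does not have. The fix is immediate and shortens the argument: apply the L\'evy-measure-based gradient estimate of \cite{ssw12} directly to \eqref{ex-eq11} to obtain \eqref{A2} with $\alpha=\varrho$ for $t\in(0,1)$, and keep your symbol lower bound only for the verification of \eqref{A1}.
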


If the L\'evy measure $\nu$ \emph{equals} the right-hand side of \eqref{ex-eq11}, then the assumption \eqref{ex-eq13} is trivially satisfied; this is, for instance, the case if $(X_t)_{t \geq 0}$ is isotropic $\alpha$-stable or relativistic $\alpha$-stable. In particular, Corollary~\ref{ex-5} generalizes \cite[Theorem 1.1]{ros-oton16}. \par \medskip
The next corollary gives a criterion for \eqref{A1}-\eqref{A3} in terms of the growth of the characteristic exponent of $(X_t)_{t \geq 0}$.

\begin{kor} \label{ex-6}
	Let $(X_t)_{t \geq 0}$ be a L\'evy process with infinitesimal generator $(A,\mc{D}(A))$. If the characteristic exponent $\psi$ satisfies the sector condition, $|\im \psi(\xi)| \leq c \re \psi(\xi)$, and \begin{equation}
		\re \psi(\xi) \asymp |\xi|^{\varrho} \quad \text{as $|\xi| \to \infty$} \label{ex-eq45}
	\end{equation}
	for some $\varrho \in (0,2)$, then the interior Schauder estimates in Theorem~\ref{main-1}, Corollary~\ref{main-3} and Corollary~\ref{main-5} hold with $\alpha=\varrho$.
\end{kor}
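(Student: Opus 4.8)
The plan is to check that the hypotheses \eqref{A1}--\eqref{A3} of Theorem~\ref{main-1} are satisfied with $\alpha=\varrho$; Corollary~\ref{main-3} and Corollary~\ref{main-5} then follow immediately. Condition \eqref{A1} is the easiest: since $\re\psi(\xi)\geq c|\xi|^{\varrho}$ for all large $|\xi|$ by \eqref{ex-eq45} and a power dominates the logarithm, we get $|\re\psi(\xi)|/\log(1+|\xi|)\to\infty$ as $|\xi|\to\infty$.

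Next I would settle the balance condition \eqref{A3}. Because $\re\psi(\xi)\leq C|\xi|^{\varrho}$ with $\varrho<2$, the Gaussian part must vanish: if $Q\neq 0$, then $\tfrac12\xi\cdot Q\xi$ would force $\re\psi$ to grow at least like $|\xi|^2$ along an eigendirection of $Q$, contradicting the upper bound. Hence $Q=0$ and the first alternative in \eqref{A3} holds. To produce an admissible exponent $\gamma$, I would extract a tail bound on $\nu$ from the upper growth of $\re\psi$. Averaging $\re\psi$ over a ball $B(0,r)$, using $Q=0$ (so that $\re\psi(\eta)=\int(1-\cos(\eta\cdot y))\,\nu(dy)$) together with the elementary bound $1-|B(0,r)|^{-1}\int_{B(0,r)}\cos(\eta\cdot y)\,d\eta\geq c\min\{1,r^2|y|^2\}$, yields
\begin{equation*}
	c\int_{y\neq 0}\min\{1,r^2|y|^2\}\,\nu(dy)\leq \sup_{|\eta|\leq r}\re\psi(\eta)\leq Cr^{\varrho},\qquad r\geq 1,
\end{equation*}
whence $\nu(\{|y|>s\})\leq C's^{-\varrho}$ for all small $s>0$. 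A layer-cake computation then gives $\int_{|y|\leq 1}|y|^{\gamma}\,\nu(dy)<\infty$ for every $\gamma>\varrho$. Since $\varrho<2$, I may fix $\gamma\in(\varrho,\min\{2,\varrho+1\})$; this $\gamma$ satisfies $\int_{|y|\leq 1}|y|^{\gamma}\,\nu(dy)<\infty$ and $\alpha+1=\varrho+1>\gamma$, so \eqref{A3} holds.

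The main obstacle is the gradient estimate \eqref{A2}. Here the sector condition enters: $|\psi(\xi)|^2=(\re\psi(\xi))^2+(\im\psi(\xi))^2\leq(1+c^2)(\re\psi(\xi))^2$, so \eqref{ex-eq45} upgrades to the two-sided bound $c|\xi|^{\varrho}\leq\re\psi(\xi)\leq|\psi(\xi)|\leq C|\xi|^{\varrho}$ for large $|\xi|$. Thus $|\widehat{p_t}(\xi)|=e^{-t\re\psi(\xi)}\leq e^{-ct|\xi|^{\varrho}}$, so by Fourier inversion $p_t\in C_b^{\infty}(\mbb{R}^d)$ and $\nabla p_t(x)=(2\pi)^{-d}\int_{\mbb{R}^d}(-i\xi)e^{-ix\cdot\xi}e^{-t\psi(\xi)}\,d\xi$. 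I would bound the $L^1$-norm through a weighted Cauchy--Schwarz inequality: for an integer $m>d/2$,
\begin{equation*}
	\int_{\mbb{R}^d}|\nabla p_t(x)|\,dx\leq\left(\int_{\mbb{R}^d}(1+|x|^2)^{-m}\,dx\right)^{1/2}\left(\int_{\mbb{R}^d}(1+|x|^2)^m|\nabla p_t(x)|^2\,dx\right)^{1/2},
\end{equation*}
and control the second factor via Plancherel's theorem, turning it into the $L^2$-norms of the Fourier-space derivatives $\partial_\xi^{\beta}(\xi\,e^{-t\psi(\xi)})$ for $|\beta|\leq m$. The delicate point is that $\psi$ itself need not be smooth; to get around this I would split off the large jumps, writing $p_t=p_t^{0}\ast\mu_t$, where $p_t^0$ is the density of the process with L\'evy triplet $(b,0,\nu|_{\{|y|\leq 1\}})$ and $\mu_t$ is a probability measure, so that $\|\nabla p_t\|_{L^1}\leq\|\nabla p_t^{0}\|_{L^1}$ by Young's inequality. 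The truncated exponent $\psi_0$ is smooth, with $\partial_\xi^{\beta}\psi_0$ bounded for $|\beta|\geq 2$ and $|\partial_\xi^{\beta}\psi_0(\xi)|\leq C(1+|\xi|^{\varrho})$ in general, and it retains the two-sided growth; so Fa\`a di Bruno's formula together with $e^{-t\re\psi_0(\xi)}\leq e^{-ct|\xi|^{\varrho}}$ bounds each $\int|\partial_\xi^{\beta}(\xi\,e^{-t\psi_0})|^2\,d\xi$ by a suitable negative power of $t$. Assembling these and tracking the scaling $p_t^{0}(x)\sim t^{-d/\varrho}p_1^{0}(t^{-1/\varrho}x)$, which predicts $\|\nabla p_t^{0}\|_{L^1}\asymp t^{-1/\varrho}$, yields \eqref{A2} with $\alpha=\varrho$. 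Alternatively, this gradient estimate under precisely the sector condition and \eqref{ex-eq45} is available in \cite{reg-levy}, which I would cite to keep the argument short. With \eqref{A1}--\eqref{A3} verified for $\alpha=\varrho$, the assertion follows from Theorem~\ref{main-1}, Corollary~\ref{main-3} and Corollary~\ref{main-5}.
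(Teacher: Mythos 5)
Your overall route is exactly the paper's: verify \eqref{A1}--\eqref{A3} with $\alpha=\varrho$ and then invoke Theorem~\ref{main-1}, Corollary~\ref{main-3} and Corollary~\ref{main-5}. Your handling of \eqref{A1} coincides with the paper's ("trivially satisfied"), and your handling of \eqref{A3} is correct and in fact more self-contained: you prove the moment bound $\int_{|y|\leq 1}|y|^{\gamma}\,\nu(dy)<\infty$ for $\gamma>\varrho$ by averaging the symbol over balls and a layer-cake argument, whereas the paper simply cites \cite[Lemma A.2]{ihke}; you also make explicit the point, left implicit in the paper, that the upper bound in \eqref{ex-eq45} with $\varrho<2$ forces $Q=0$, so the first alternative in \eqref{A3} holds.

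The genuine gap is in your primary argument for \eqref{A2}. With the $t$-independent weight $(1+|x|^2)^{-m}$, the Cauchy--Schwarz step cannot produce the decay $t^{-1/\varrho}$: the first factor is a constant, while the second factor dominates the unweighted $L^2$-norm, which by Plancherel satisfies
\begin{equation*}
	\left(\int_{\mbb{R}^d}(1+|x|^2)^m|\nabla p_t^0(x)|^2\,dx\right)^{1/2}
	\geq \|\nabla p_t^0\|_{L^2}
	= c\left(\int_{\mbb{R}^d}|\xi|^2 e^{-2t\re\psi_0(\xi)}\,d\xi\right)^{1/2}
	\geq c' \, t^{-(d+2)/(2\varrho)}
\end{equation*}
for small $t$, where the last step restricts the integral to $1\leq|\xi|\leq t^{-1/\varrho}$ and uses the \emph{upper} bound $\re\psi_0\leq C|\xi|^{\varrho}$ from \eqref{ex-eq45}. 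Since $(d+2)/2>1$ in every dimension $d\geq 1$, the best bound your method can yield is of order $t^{-(d+2)/(2\varrho)}\gg t^{-1/\varrho}$, so it cannot establish \eqref{A2}. To repair it one needs scale-adapted weights $(1+t^{-2/\varrho}|x|^2)^{-m}$, or the standard splitting of $\mbb{R}^d$ into $\{|x|\leq t^{1/\varrho}\}$ (where the sup-bound $\|\nabla p_t^0\|_{\infty}\lesssim t^{-(d+1)/\varrho}$ suffices) and its complement (where one integrates by parts $m$ times in the Fourier variable); relatedly, the scaling $p_t^0(x)\sim t^{-d/\varrho}p_1^0(t^{-1/\varrho}x)$ you invoke is only a heuristic, as the truncated process is not self-similar, and must be replaced by actual bookkeeping of the $t$-powers. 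Your fallback of simply citing the literature for the gradient estimate under the sector condition and \eqref{ex-eq45} is precisely what the paper does, except that the paper's citation is \cite{ssw12}, which is the primary source of this estimate (\cite{reg-levy} imports it from there). With that citation replacing the sketched Fourier argument, your proof is complete and agrees with the paper's.
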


\begin{proof}
	The Hartman--Wintner condition \eqref{A1} is trivially satisfied. It follows from \cite{ssw12} that the gradient estimate $\int_{\mbb{R}^d} |\nabla p_t(x)| \, dx \leq ct^{-1/\varrho}$ holds for $t \in (0,1)$, i.e.\ $\alpha=\varrho$ in \eqref{A2}. Moreover, \eqref{ex-eq45} implies that the L\'evy measure $\nu$ satisfies $\int_{|y| \leq 1} |y|^{\beta} \, \nu(dy)<\infty$ for all $\beta>\varrho$, cf.\ \cite[Lemma A.2]{ihke}, and by choosing $\beta$ close to $\varrho$, we find that the balance condition \eqref{A3} holds. 
\end{proof}

Corollary~\ref{ex-6} covers many important and interesting examples, e.g.\  \begin{itemize}
	\item isotropic stable, relativistic stable and tempered stable L\'evy processes, 
	\item subordinated Brownian motions with characteristic exponent of the form $\psi(\xi) = f(|\xi|^2)$ for a Bernstein function $f$ satisfying $f(\lambda) \asymp \lambda^{\varrho/2}$ for large $\lambda$, cf.\ \cite{bernstein} for details.
	\item  L\'evy processes with symbol of the form \begin{equation*}
		\psi(\xi) = |\xi|^{\varrho} + |\xi|^{\beta}, \qquad \xi \in \mbb{R}^d,
	\end{equation*}
	for $\beta \in (0,\varrho)$. 
\end{itemize}

Our final example in this section is concerned with the operator \begin{equation*}
	Af(x,y) = - \left( - \frac{\partial^2}{\partial x^2} \right)^{\beta_1/2} f(x,y)   - \left( - \frac{\partial^2}{\partial y^2} \right)^{\beta_2/2} f(x,y), \qquad f \in C_c^{\infty}(\mbb{R}^2),\; x,y \in \mbb{R},
\end{equation*}
which arises as infinitesimal generator of a L\'evy process $(X_t)_{t \geq 0}$ of the form $X_t = (X_t^{(1)},X_t^{(2)})$ where $(X_t^{(i)})_{t \geq 0}$ are independent one-dimensional isotropic stable L\'evy processes with index $\beta_i \in (0,2]$, $i=1,2$, see e.g.\,\cite{pruitt69} for more information. The difference $|\beta_1-\beta_2|$ measures how much the behaviour of the first coordinate $(X_t^{(1)})_{t \geq 0}$ differs from the behaviour of the second coordinate  $(X_t^{(2)})_{t \geq 0}$. If $|\beta_1-\beta_2|$ is large (i.e.\ close to $2$), we are dealing with a highly anisotropic process. 

\begin{bsp} \label{ex-7}
	Let $(X_t)_{t \geq 0}$ be a two-dimensional L\'evy process with characteristic exponent \begin{equation*}
		\psi(\xi,\eta) = |\xi|^{\beta_1} + |\eta|^{\beta_2}, \qquad \xi, \eta \in \mbb{R}
	\end{equation*}
	for some constants $\beta_i \in (0,2]$, $i=1,2$. \begin{enumerate}
		\item The balance condition \eqref{A3} holds if, and only if, $|\beta_1-\beta_2|<1$.
		\item If $|\beta_1-\beta_2|<1$ then the interior Schauder estimates in Theorem~\ref{main-1}, Corollary~\ref{main-3} and Corollary~\ref{main-5} hold with $\alpha=\min\{\beta_1,\beta_2\}$.
	\end{enumerate}
\end{bsp}

Let us mention a further class of processes illustrating the role of the balance condition \eqref{A3}. Farkas \cite[Example 2.1.15]{farkas} showed that for every $0<\beta< \alpha<2$ there exists a one-dimensional L\'evy process whose characteristic exponent $\psi$ oscillates for $|\xi| \to \infty$ between $|\xi|^{\beta}$ and $2|\xi|^{\alpha}$. Since the growth of $\psi$ at infinity is closely linked to existence of fractional moments $\int_{|y| \leq 1} |y|^{\gamma} \, \nu(dy)$, it stands to reason that $\int_{|y| \leq 1} |y|^{\gamma} \, \nu(dy)<\infty$ only for $\gamma>\beta$ and $\int_{\mbb{R}^d} |\nabla p_t(x)| \, dx \leq Mt^{-1/\alpha}$. In particular, the balance condition \eqref{A3} fails if $\alpha-\beta>1$.

\appendix

\section{}

For the proof of our results we used the following lemma, which was already stated in Section~\ref{def}.

\begin{lem} \label{app-1}
	Let $\alpha \in (0,\infty)$, and let $U \subseteq \mbb{R}^d$ be open. The following statements hold for any $j \geq k := \lfloor \alpha \rfloor+1$: \begin{enumerate}
		\item\label{app-1-i} There exists a constant $c>0$ such that \begin{equation}
			\sup_{0<|h| \leq r} \frac{|\Delta_h^k f(x)|}{|h|^{\alpha}} 
			\leq c r^{-\alpha} \|f\|_{\infty,U} + c \sup_{0<|h| \leq r/j} \sup_{z \in B(x,r(k+1))} \frac{|\Delta_h^j f(z)|}{|h|^{\alpha}} \label{app-eq3}
		\end{equation}
		for all $f \in C_b(U)$, $r >0$, and $x \in U$ with $B(x,r(k+2)) \subseteq U$.
		\item\label{app-1-ii} If $\alpha>1$ then there exists a constant $c>0$ such that \begin{equation}
			\max_{i=1,\ldots,d} |\partial_{x_i} f(x)|
			\leq c r^{-\alpha} \|f\|_{\infty,U} + c \sup_{0<|h| \leq r/j} \sup_{z \in B(x,r(k+1))} \frac{|\Delta_h^j f(z)|}{|h|^{\alpha}} \label{app-eq4}
		\end{equation}
		for all $f \in C_b^1(U)$, $r >0$ and $x \in U$ with $B(x,r(k+2)) \subseteq U$.
	\end{enumerate}
\end{lem}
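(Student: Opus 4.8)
The plan is to prove this by a localized Marchaud-type argument, handling both parts with the same dyadic engine. Throughout I fix $x$ with $B(x,r(k+2))\subseteq U$, so that all function values occurring below lie in $U$. The algebraic heart is the operator identity $\Delta_{2h}=2\Delta_h+\Delta_h^2$, which upon raising to the $\ell$-th power and isolating the lowest-order term gives
\[
    2^{\ell}\Delta_h^{\ell}f(x)=\Delta_{2h}^{\ell}f(x)-\sum_{i=1}^{\ell}\binom{\ell}{i}2^{\ell-i}\Delta_h^{\ell+i}f(x).
\]
Writing $\Delta_h^{\ell+i}f(x)=\Delta_h^{\,i-1}(\Delta_h^{\ell+1}f)(x)$ shows that every higher-order term on the right is controlled, up to a factor $2^{i-1}$, by order-$(\ell+1)$ differences evaluated at the points $x,x+h,\dots,x+(i-1)h$. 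This is what lets me climb from order $\ell$ to order $\ell+1$ while keeping track of the displacement of the base point.

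First I would treat part \eqref{app-1-i}. Fixing $0<|h|\le r$, I iterate the identity upward in scale along $h,2h,\dots,2^{N}h$, where $N$ is the largest integer with $2^{N}|h|\le r$, which yields
\[
    \Delta_h^{\ell}f(x)=2^{-\ell N}\Delta_{2^{N}h}^{\ell}f(x)-\sum_{n=0}^{N-1}2^{-\ell(n+1)}\sum_{i=1}^{\ell}\binom{\ell}{i}2^{\ell-i}\Delta_{2^{n}h}^{\ell+i}f(x).
\]
I then split the sum at the scale $\approx r/j$: for the large scales I use the trivial bound $|\Delta_{s}^{m}f(x)|\le 2^{m}\|f\|_{\infty,U}$, which together with the leading term contributes $\lesssim r^{-\alpha}\|f\|_{\infty,U}$ after dividing by $|h|^{\alpha}$ (here one uses $\ell\ge k>\alpha$ and $|h|\le r$); for the small scales I bound the higher-order differences by the order-$(\ell+1)$ seminorm. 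The decisive point is that, because $\ell\ge k>\alpha$, the resulting geometric series has ratio $2^{\alpha-\ell}<1$ and therefore converges, producing a clean estimate of $\Delta_h^\ell$ by $r^{-\alpha}\|f\|_{\infty,U}$ plus the order-$(\ell+1)$ seminorm. Iterating this single step for $\ell=k,k+1,\dots,j-1$ climbs from order $k$ to the target order $j$; composing the $j-k$ inequalities gives part \eqref{app-1-i}.

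For part \eqref{app-1-ii} I would use $\alpha>1$, so that $k\ge 2$, and recover the derivative from first differences via $\partial_{x_i}f(x)=\lim_{t\to0}t^{-1}\Delta_{te_i}f(x)$, whence $|\partial_{x_i}f(x)|\le\sup_{0<|h|\le r}|\Delta_h^{1}f(x)|/|h|$. Applying the same upward-in-scale iteration at order $1$ and inserting the bound on second differences that follows from part \eqref{app-1-i} (directly when $\alpha<2$, since then $k=2$, and after an analogous descent from order $k$ to order $2$ otherwise), the relevant sum is now a \emph{finite} geometric series with ratio $2^{\alpha-1}>1$; its growth is exactly $\approx(r/|h|)^{\alpha-1}$, which is compensated by the extra factor $|h|$ coming from the first difference, leaving the stated bound after letting $h\to0$.

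The conceptual ingredients — the Marchaud identity and the convergence or compensation of the geometric series, governed by the sign of $\alpha-\ell$ — are standard. The real work, and the step I expect to be the main obstacle, is the localization bookkeeping: verifying at every doubling and every order-reduction that the arguments $x+(\text{integer})\cdot(\text{scale})$ stay inside $B(x,r(k+2))\subseteq U$, and that the enlargements of the base ball and the shrinkings of the admissible step size accumulated over the $j-k$ climbing steps telescope precisely into the radius $r(k+1)$ and the step bound $r/j$ appearing in the statement. One should also dispose of the regime $r>1$ separately, where the order-$j$ seminorm on the right already dominates $\|f\|_{\infty,U}$ and the estimate is immediate.
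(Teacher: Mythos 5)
Your proposal takes a genuinely different route from the paper's. The paper localizes by multiplying with a cutoff $\chi$, invoking the \emph{global} equivalence of H\"older--Zygmund norms \eqref{def-eq5} (Triebel) for the product $g\chi$, and then removing the cutoff again via the discrete Leibniz rule \eqref{app-eq5}; all radii $r$ are treated at once by applying the resulting one-dimensional inequality to $g(t):=f(x+rth)$. You instead re-prove the reduction inequality from scratch by a Marchaud-type dyadic iteration based on $2^{\ell}\Delta_h^{\ell}=\Delta_{2h}^{\ell}-\sum_{i=1}^{\ell}\binom{\ell}{i}2^{\ell-i}\Delta_h^{\ell+i}$, with the convergent geometric series $\sum_n 2^{n(\alpha-\ell)}$ for $\ell\geq k>\alpha$ and the divergent one for the gradient. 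This is self-contained (no appeal to Triebel), at the price of the localization bookkeeping; note that the bookkeeping has slack, since it suffices to end up with \emph{any} base ball contained in $B(x,r(k+1))$ and \emph{any} step bound $\leq r/j$, the corresponding suprema being dominated by those in \eqref{app-eq3}. For part \eqref{app-1-i} your argument is sound and, contrary to your closing remark, needs no separate treatment of $r>1$: the leading term and the large-scale tail are bounded by $c\,|h|^{\ell-\alpha}(r/j)^{-\ell}\|f\|_{\infty,U}\leq c'\,r^{-\alpha}\|f\|_{\infty,U}$ uniformly in $r$, precisely because $\ell>\alpha$ and $|h|\leq r$.

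The genuine gap is in your last paragraph, for part \eqref{app-1-ii}. The claim that for $r>1$ ``the order-$j$ seminorm on the right already dominates $\|f\|_{\infty,U}$'' is false (any nonzero constant function has vanishing seminorm), and this cannot be repaired, as your own computation shows: the divergent series yields the correct bound
\begin{equation*}
	|\nabla f(x)| \leq c\,r^{-1}\|f\|_{\infty,U} + c\,r^{\alpha-1}\sup_{0<|h|\leq r/j}\;\sup_{z\in B(x,r(k+1))}\frac{|\Delta_h^jf(z)|}{|h|^{\alpha}},
\end{equation*}
which gives \eqref{app-eq4} only for bounded $r$ (then $r^{-1}\leq C r^{-\alpha}$ and $r^{\alpha-1}\leq C$). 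For unbounded $r$ the inequality \eqref{app-eq4} is in fact false: take $d=1$, $U=\mbb{R}$, $\alpha\in(1,2)$, $j=k=2$, $f_{\eps}(y):=\eps^{-1}\sin(\eps y)$, $x=0$ and $r=1/\eps$; then the left-hand side equals $1$, while $r^{-\alpha}\|f_{\eps}\|_{\infty}=\eps^{\alpha-1}$ and $|\Delta_h^2f_{\eps}(z)|/|h|^{\alpha}\leq\min\{\eps|h|^{2-\alpha},\,4\eps^{-1}|h|^{-\alpha}\}\leq 2^{2-\alpha}\eps^{\alpha-1}$, so the right-hand side tends to $0$ as $\eps\to0$. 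You should therefore delete the $r>1$ remark and prove part \eqref{app-1-ii} for $r\leq1$, or with a constant depending on an upper bound for $r$. You share this defect with the paper itself: its proof of \eqref{app-1-ii} (apply \eqref{app-eq15} to $g(t)=f(x+rte_i)$) produces exactly the displayed bound, so the published ``for all $r>0$'' is too strong in part \eqref{app-1-ii}; this is harmless for the paper because the lemma is only ever used with $r=r_x\leq 2$. With that restriction, your argument is correct.
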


\begin{proof}
	First of all, we note that it suffices to prove both statements for $f \in C_b^2(U)$; the inequalities can be extended using a standard approximation technique, i.e.\ by considering $f_i := f \ast \varphi_i$ for a sequence of mollifiers $(\varphi_i)_{i \geq 1}$. \par
	Denote by $\tau_h f(x) := f(x+h)$ the shift operator. A straight-forward computation shows that  \begin{equation}
		\Delta_h^n(u \cdot v) = \sum_{\ell=0}^n {n \choose \ell} \Delta_h^\ell u \cdot \Delta_h^{n-\ell} \tau_h^\ell v \label{app-eq5}
	\end{equation}
	holds for any $n \in \mbb{N}$, $h \in \mbb{R}^d$ and any two functions $u,v$. \par
	To prove \eqref{app-1-i} we note that the assertion is obvious for $j=k$, and so it suffices to consider $j>k$. We will first establish the following auxiliary statement: There exists a constant $C>0$ such that \begin{equation}
		\sup_{0<|t| \leq 1} \frac{|\Delta_t^k g(0)|}{|t|^{\alpha}} \leq C \|g\|_{\infty,(-k-2,k+2)} + C \sup_{0<|t| \leq 1/j} \sup_{|y| \leq k+1} \frac{|\Delta_t^j g(y)|}{|t|^{\alpha}} \label{app-eq7}
	\end{equation}
	for any twice differentiable bounded function $g: (-k-2,k+2) \to \mbb{R}$. To this end, pick $\chi \in C_c^{\infty}(\mbb{R})$ such that $\I_{[-k,k]} \leq \chi \leq \I_{(-k-1/3,k+1/3)}$. Clearly, \begin{equation*}
		\sup_{0<|t| \leq 1} \frac{|\Delta_t^k g(0)|}{|t|^{\alpha}} = \sup_{0<|t| \leq 1} \frac{|\Delta_t^k (g \chi)(0)|}{|t|^{\alpha}}.
	\end{equation*}
	Using the equivalence of the norms on $\mc{C}_b^{\alpha}(\mbb{R})$, cf.\ \eqref{def-eq5}, we get  \begin{align*}
		\sup_{0<|t| \leq 1} \frac{|\Delta_t^k g(0)|}{|t|^{\alpha}} 
		\leq \|g \chi\|_{\mc{C}_b^{\alpha}(\mbb{R})}
		&\leq c_1 \|g\chi\|_{\infty} + c_1 \sup_{|t| \leq 1} \sup_{y \in \mbb{R}} \frac{|\Delta_t^{2j} (g \chi)(y)|}{|t|^{\alpha}} \\
		&\leq c_1' \|g\|_{\infty,(-k-1,k+1)} + c_1 \sup_{|t| \leq 1/(6j)} \sup_{y \in \mbb{R}} \frac{|\Delta_t^{2j} (g \chi)(y)|}{|t|^{\alpha}} 
	\end{align*}
	for some constants $c_1$ and $c_1'$. As $\chi=0$ on $\mbb{R} \backslash (-k-1/3,k+1/3)$, we have $\Delta_t^{2j} g(y)=0$ for all $|y|>k+\frac{2}{3}$ and $|t| \leq \frac{1}{6j}$. Consequently, 
	\begin{align}
		\sup_{0<|t| \leq 1} \frac{|\Delta_t^k g(0)|}{|t|^{\alpha}} 
		&\leq c_1'  \|g\|_{\infty,(-k-1,k+1)} + c_1 \sup_{|t| \leq 1/(6j)} \sup_{|y| \leq k+2/3} \frac{|\Delta_t^{2j} (g \chi)(y)|}{|t|^{\alpha}}. \label{app-eq9}
	\end{align}
	Since \begin{equation*}
		|\Delta_t^{\ell} g(y)|
		= |\Delta_t^{\ell-j} \Delta_t^j g(y)|
		\leq 2^{\ell-j} \sup_{|z-y| \leq (\ell-j)t} |\Delta_t^{j} g(z)|, \qquad \ell \geq j,
	\end{equation*}
	and \begin{equation*}
		\|\Delta_t^{\ell} \chi\|_{\infty}
		\leq c |t|^k \|\chi\|_{C_b^k(\mbb{R})}, \qquad \ell \geq j \geq k,
	\end{equation*}
	an application of the product formula \eqref{app-eq5} gives \begin{align*}
		|\Delta_t^{2j} (g \chi)(y)|
		&\leq c_2  \|g\|_{\infty,(-k-1,k+1)} \sum_{\ell=0}^{j-1} \|\Delta_t^{2j-\ell} \chi\|_{\infty}+ c_2 \|\chi\|_{\infty} \sum_{\ell=j}^{2j} |\Delta_t^{\ell} g(y)| 	\\
		&\leq c_3 \|g\|_{\infty,(-k-1,k+1)} |t|^k \|\chi\|_{C_b^k(\mbb{R})} + c_3 \sup_{|z-y| \leq 2tj} |\Delta_t^j g(z)|
	\end{align*}
	for all $|y| \leq k+\frac{2}{3}$ and $|t| \leq \frac{1}{6j}$. Combining this estimate with \eqref{app-eq9} and noting that $\alpha \leq k$ proves \eqref{app-eq7}. Now if $f \in C_b^2(U)$, then we apply \eqref{app-eq7} with $g(t) := f(x+rt h)$ for fixed $|h|=1$ to get the desired inequality.  \par
	It remains to prove \eqref{app-1-ii}. First we consider the case $\alpha \in (1,2)$ and $j=2$. The auxiliary inequality which we need is \begin{equation}
		|g'(0)| \leq C \|g\|_{\infty,(-3,3)} + C \sup_{|t| \leq 1/2} \sup_{|y| \leq 2} \frac{|\Delta_t^2 g(y)|}{|t|^{\alpha}} \label{app-eq15}
	\end{equation}
	for a uniform constant $C>0$ where $g: \mbb{R} \to \mbb{R}$ is differentiable on $(-3,3)$. To this end, choose $\chi \in C_c^2(\mbb{R})$ with $\I_{[-1,1]} \leq \chi \leq \I_{(-4/3,4/3)}$. By the equivalence of the norms on the H\"{o}lder--Zygmund space $\mc{C}_b^{\alpha}(\mbb{R})$, cf.\ \eqref{def-eq5}, we get \begin{equation*}
		|g'(0)| = |(g \chi)'(0)| 
		\leq \|g \chi \|_{C_b^1(\mbb{R})} 
		\leq \|g \chi\|_{\mc{C}_b^{\alpha}(\mbb{R})} 
		\leq c_4 \|g \chi\|_{\infty}  + c_4 \sup_{|t| \leq 1} \sup_{y \in \mbb{R}} \frac{|\Delta_t^4 (g \chi)(y)|}{|t|^{\alpha}}
	\end{equation*}
	for some finite constant $c_4>0$. Following the reasoning in the first part of the proof (with $k=1$ and $j=2$) yields \eqref{app-eq15}. Applying \eqref{app-eq15} for $g(t) := f(x+rt e_j)$, where $e_j$ is the $j$-th unit vector in $\mbb{R}^d$, gives \eqref{app-1-ii} for $\alpha \in (1,2)$ and $j=2$. In combination with \eqref{app-1-i}, this yields the desired inequality for every $\alpha>1$ and $j \geq \floor{\alpha}+1$.
\end{proof}

\end{document}